\numberwithin{equation}{section}
\numberwithin{figure}{section}
\theoremstyle{plain}
\newtheorem{thm}{\protect\theoremname}[section]
\theoremstyle{plain}
\newtheorem{fact}[thm]{\protect\factname}
\theoremstyle{plain}
\newtheorem{conjecture}[thm]{\protect\conjecturename}
\theoremstyle{remark}
\newtheorem{rem}[thm]{\protect\remarkname}
\theoremstyle{definition}
\newtheorem{defn}[thm]{\protect\definitionname}
\theoremstyle{plain}
\newtheorem{lem}[thm]{\protect\lemmaname}
\theoremstyle{plain}
\newtheorem{cor}[thm]{\protect\corollaryname}
\date{}
\newcommand{\FigBesBeg}[1][1.0]{%
 \let\MyFigure\figure
 \let\MyEndfigure\endfigure
 \renewenvironment{figure}[1]{\begin{SCfigure}[#1]##1}{\end{SCfigure}}}
\newcommand{\FigBesEnd}{%
 \let\figure\MyFigure
 \let\endfigure\MyEndfigure}
\providecommand{\conjecturename}{Conjecture}
\providecommand{\corollaryname}{Corollary}
\providecommand{\definitionname}{Definition}
\providecommand{\factname}{Fact}
\providecommand{\lemmaname}{Lemma}
\providecommand{\remarkname}{Remark}
\providecommand{\theoremname}{Theorem}
\begin{document}
\selectlanguage{english}%
\global\long\def\genus{\mathrm{genus}}
 \global\long\def\Hom{\mathrm{Hom}}
 \global\long\def\wedger{{\textstyle \bigvee^{r}S^{1}} }
\global\long\def\Stab{\mathrm{Stab}}
\global\long\def\trw{{\cal T}r_{w} }
\global\long\def\trwl{{\cal T}r_{w_{1},\ldots,w_{\ell}} }
 \global\long\def\tr{{\cal T}r }
 \global\long\def\cl{{\cal \mathrm{cl}} }
 \global\long\def\sql{{\cal \mathrm{sql}} }
 \global\long\def\wg{{\cal \mathrm{Wg}} }
\global\long\def\moeb{\mathrm{M\ddot{o}b} }
\global\long\def\F{\mathrm{\mathbf{F}} }
 \global\long\def\fs{{\cal FS} }
 \global\long\def\auteq{\stackrel{\Aut(\F)}{\sim} }
 \global\long\def\auteqr{\stackrel{\Aut(\F_{r})}{\sim} }
 \global\long\def\id{\mathrm{id}}
 \global\long\def\e{\varepsilon}
 \global\long\def\U{\mathcal{U}}
 \global\long\def\O{\mathcal{O}}
 \global\long\def\Aut{\mathrm{Aut}}
 \global\long\def\E{\mathbb{E}}
 \global\long\def\Q{\mathbb{\mathbb{\mathbf{Q}}}}
\global\long\def\wl{w_{1},\ldots,w_{\ell}}
 \global\long\def\ssn{{\cal S}^{1}\wr S_{N}}
\global\long\def\cmsn{C_{m}\wr S_{N}}
 \global\long\def\ctsn{C_{2}\wr S_{N}}
 \global\long\def\std{\mathrm{std}}
\global\long\def\Xcov{{\scriptscriptstyle \overset{\twoheadrightarrow}{X}}}
 \global\long\def\covers{\leq_{\Xcov}}
 \global\long\def\ae{{\cal AE}}
 \global\long\def\alg{\le_{\mathrm{alg}}}
 \global\long\def\rk{\mathrm{rk}}
\global\long\def\ff{\stackrel{*}{\le}}
\global\long\def\H{{\cal H}}
 \global\long\def\ecm{\chi_{m}}
 \global\long\def\eci{\chi_{\infty}}
 \global\long\def\ect{\chi_{2}}
 \global\long\def\Xcov{{\scriptscriptstyle \overset{\twoheadrightarrow}{X}}}
 \global\long\def\covers{\leq_{\Xcov}}
 \global\long\def\XCO#1#2{\left[#1,#2\right)_{\Xcov}}
 \global\long\def\XF#1{\XCO{#1}{\infty}}

\title{Surface Words are Determined by Word Measures on Groups}

\author{Michael Magee ~and~ Doron Puder}

\date{\today }
\maketitle
\begin{abstract}
Every word $w$ in a free group naturally induces a probability measure
on every compact group $G$. For example, if $w=\left[x,y\right]$
is the commutator word, a random element sampled by the $w$-measure
is given by the commutator $\left[g,h\right]$ of two independent,
Haar-random elements of $G$. Back in 1896, Frobenius showed that
if $G$ is a finite group and $\psi$ an irreducible character, then
the expected value of $\psi\left(\left[g,h\right]\right)$ is $\frac{1}{\psi\left(e\right)}$.
This is true for any compact group, and completely determines the
$\left[x,y\right]$-measure on these groups. An analogous result holds
with the commutator word replaced by any surface word. 

We prove a converse to this theorem: if $w$ induces the same measure
as $\left[x,y\right]$ on every compact group, then, up to an automorphism
of the free group, $w$ is equal to $\left[x,y\right]$. The same
holds when $\left[x,y\right]$ is replaced by any surface word.

The proof relies on the analysis of word measures on unitary groups
and on orthogonal groups, which appears in separate papers, and on
new analysis of word measures on generalized symmetric groups that
we develop here.
\end{abstract}
\tableofcontents{}
\selectlanguage{american}%

\section{Introduction}

\selectlanguage{english}%
Let $\F_{r}$ be the free group on $r$ generators $x_{1},\ldots,x_{r}$,
and let $G$ be any finite, or more generally, compact group. Every
word $w\in\F_{r}$ induces a map, called a \emph{word map}, 
\[
w\colon\underbrace{G\times\ldots\times G}_{r~\mathrm{times}}\to G,
\]
defined by substitutions. For example, if $w=x_{1}x_{3}x_{1}x_{3}^{-2}\in\F_{3}$,
then $w\left(g_{1},g_{2},g_{3}\right)=g_{1}g_{3}g_{1}g_{3}^{-2}$.
The push-forward via this word map of the Haar probability measure
(uniform measure in the finite case) on $G\times\ldots\times G$ is
called the \emph{$w$-measure} on $G$. Put differently, for each
$1\le i\le r$, substitute $x_{i}$ with an independent, Haar-distributed
random element of $G$, and evaluate the product defined by $w$ to
obtain a random element in $G$ sampled by the $w$-measure. We say
the resulting element is a \emph{$w$-random} element of $G$. 

\subsection*{Measures induced by surface words}

The study of word measures in groups has its seeds in the 1896 work
of Frobenius \cite{frobenius1896gruppencharaktere}. Let $\left[x,y\right]=xyx^{-1}y^{-1}$
be\footnote{\selectlanguage{american}%
Throughout this paper, the letters $x$, $y$, and also $x_{i}$ and
$y_{j}$ denote different generators in the same basis of $\F_{r}$.\selectlanguage{english}%
} the commutator word. Frobenius shows that the $\left[x,y\right]$-measure
on a finite group $G$ is given by 
\[
\frac{1}{\left|G\right|}\sum_{\psi\in\mathrm{Irr}\left(G\right)}\frac{1}{\psi\left(e\right)}\psi,
\]
where $\mathrm{Irr}\left(G\right)$ marks the set of irreducible characters
of $G$ and $e$ is the identity element of $G$. As word measures
on finite groups are class functions, this is equivalent to the fact
that for every $\psi\in\mathrm{Irr}\left(G\right)$, the expected
value of $\psi$ under the $\left[x,y\right]$-measure is $\frac{1}{\psi\left(e\right)}=\frac{1}{\dim\psi}$.
In 1906, Frobenius and Schur \cite{frobenius1906reellen} showed that
the $x^{2}$-measure on a finite group is given by
\[
\frac{1}{\left|G\right|}\sum_{\psi\in\mathrm{Irr}\left(G\right)}\fs_{\psi}\cdot\psi,
\]
where $\fs_{\psi}$ is the Frobenius-Schur indicator of $\psi$:
\[
\fs_{\psi}=\begin{cases}
1 & \psi~\mathrm{is~afforded~by~a~real~representation}\\
-1 & \psi~\mathrm{is~real~but~is~not~afforded~by~any~real~representation}\\
0 & \psi~\mathrm{is~not~real}.
\end{cases}
\]
The statement is equivalent to that the expected value of an irreducible
character $\psi$ under the $x^{2}$-measure is $\fs_{\psi}$. In
fact, these two results hold for any compact group $G$ and can be
generalized to any surface word:
\begin{thm}[Frobenius, Frobenius-Schur]
\label{thm:Frobenius} Let $G$ be a compact group and $\psi$ an
irreducible character of $G$. Then,
\begin{enumerate}
\item For $w=\left[x_{1},y_{1}\right]\cdots\left[x_{g},y_{g}\right]$, the
expected value of $\psi$ under the $w$-measure is $\frac{1}{\psi\left(e\right)^{2g-1}}$.
\item For $w=x_{1}^{2}\cdots x_{g}^{2}$, the expected value of $\psi$
under the $w$-measure is $\frac{\left(\fs_{\psi}\right)^{g}}{\psi\left(e\right)^{g-1}}$.
\end{enumerate}
\end{thm}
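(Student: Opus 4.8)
The plan is to pass to the ``Fourier transform'' of the word measure with respect to the irreducible representation $\rho$ affording $\psi$, and to exploit the multiplicativity of this transform under concatenation of words on disjoint generators. Write $d=\psi(e)=\dim\rho$, let $\mu_w$ denote the $w$-measure on $G$, and for a probability measure $\mu$ on $G$ set $\widehat{\mu}(\rho)=\int_G\rho(g)\,d\mu(g)$, a $d\times d$ matrix. The first observation is that if $w=uv$ with $u$ and $v$ words in disjoint sets of generators, then a $w$-random element is the product of independent $u$- and $v$-random elements, so $\mu_w=\mu_u*\mu_v$ and hence $\widehat{\mu_w}(\rho)=\widehat{\mu_u}(\rho)\,\widehat{\mu_v}(\rho)$ by Fubini. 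Since $\E[\psi(w)]=\int_G\psi\,d\mu_w=\operatorname{tr}\widehat{\mu_w}(\rho)$, both parts reduce to computing $\widehat{\mu}(\rho)$ for a single commutator and for a single square, and then taking a $g$-th power.

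The engine of the computation is Schur's lemma in averaging form: for any $d\times d$ matrix $A$, the matrix $\int_G\rho(g)A\rho(g)^{-1}\,dg$ commutes with $\rho$, hence is a scalar, and taking traces gives $\int_G\rho(g)A\rho(g)^{-1}\,dg=\frac{\operatorname{tr}A}{d}I_d$. Applying this to the inner $x$-integral in $[x,y]=xyx^{-1}y^{-1}$ yields $\int_G\rho(g)\rho(h)\rho(g)^{-1}\,dg=\frac{\psi(h)}{d}I_d$, so that $\int_G\int_G\rho([g,h])\,dg\,dh=\frac1d\int_G\psi(h)\rho(h^{-1})\,dh$. A second application — now observing that $\int_G\psi(h)\rho(h^{-1})\,dh$ is again an intertwiner, because $\psi$ is a class function, with trace $\int_G|\psi(h)|^2\,dh=1$ by orthogonality of characters — gives $\int_G\psi(h)\rho(h^{-1})\,dh=\frac1d I_d$. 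Hence $\widehat{\mu_{[x,y]}}(\rho)=\frac1{d^2}I_d$, and by multiplicativity $\widehat{\mu_w}(\rho)=d^{-2g}I_d$, whose trace is $d^{1-2g}=\psi(e)^{-(2g-1)}$, proving part (1).

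For part (2) the relevant matrix is $M=\int_G\rho(g^2)\,dg=\int_G\rho(g)^2\,dg$. Invariance of Haar measure under $g\mapsto kgk^{-1}$ shows $\rho(k)M\rho(k)^{-1}=M$ for every $k$, so Schur's lemma forces $M=cI_d$; taking traces gives $c\,d=\int_G\psi(g^2)\,dg=\fs_{\psi}$, the Frobenius--Schur indicator, whence $\widehat{\mu_{x^2}}(\rho)=\frac{\fs_{\psi}}{d}I_d$. Raising to the $g$-th power yields $\widehat{\mu_w}(\rho)=(\fs_{\psi})^g d^{-g}I_d$, with trace $(\fs_{\psi})^g d^{1-g}=(\fs_{\psi})^g/\psi(e)^{g-1}$.

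The computations are short; the one genuinely representation-theoretic input, and the step I would treat most carefully, is the identification $\int_G\psi(g^2)\,dg=\fs_{\psi}$ together with the trichotomy of values $\{+1,-1,0\}$ according to whether $\rho$ is real, quaternionic, or not self-dual. This is the classical Frobenius--Schur theorem, which I would either cite or reprove by decomposing $\rho\otimes\rho$ into its symmetric and antisymmetric parts and detecting an invariant bilinear form. The remaining subtlety is purely measure-theoretic: checking that the interchanges of integration and the passage from the matrix transform to the scalar expectation are valid, which is immediate here since $G$ is compact, Haar measure is a probability measure, and all integrands are continuous and bounded.
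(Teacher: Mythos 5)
Your proposal is correct and follows essentially the same route as the paper: the reduction of the genus-$g$ case to a single commutator or single square via convolution of measures on words in disjoint letters is exactly the paper's argument, and your base-case computations are precisely the ``exercise'' via Schur orthogonality (for $\left[x,y\right]$) and the classical Frobenius--Schur identity $\int_{G}\psi\left(g^{2}\right)dg=\fs_{\psi}$ (for $x^{2}$) that the paper cites rather than reproves. The only difference is presentational --- you phrase multiplicativity through the matrix Fourier transform $\widehat{\mu}\left(\rho\right)$, while the paper states the equivalent scalar identity $\mathbb{E}_{w_{1}w_{2}}\left(\psi\right)=\frac{1}{\psi\left(e\right)}\mathbb{E}_{w_{1}}\left(\psi\right)\cdot\mathbb{E}_{w_{2}}\left(\psi\right)$ for conjugation-invariant measures --- so your write-up is simply a more self-contained rendering of the same proof.
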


\selectlanguage{american}%
Of course, here $x_{1},\ldots,x_{g},y_{1},\ldots,y_{g}$ are distinct
letters. See Section \ref{sec:Prelimiaries} for some details about
the proof.

\subsection*{Do word measures determine the word?}

For $w_{1},w_{2}\in\F_{r}$ write $w_{1}\auteqr w_{2}$ if there is
$\theta\in\mathrm{Aut}\big(\F_{r}\big)$ with $\theta\left(w_{1}\right)=w_{2}$.
It is easy to see that applying elementary Nielsen transformations
on a word does not change the measures it induces on groups (e.g.,
see \cite[Fact 2.5]{MP}), and thus
\begin{fact}
If $w_{1}\auteqr w_{2}$ then $w_{1}$ and $w_{2}$ induce the same
measure on every compact group.
\end{fact}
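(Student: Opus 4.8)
The plan is to reduce everything to the elementary level. Identifying a homomorphism $\phi\colon\F_r\to G$ with the tuple $\left(\phi(x_1),\ldots,\phi(x_r)\right)\in G^r$, the space $\Hom(\F_r,G)$ becomes $G^r$ equipped with the Haar probability measure, and the $w$-measure is precisely the push-forward of Haar under the evaluation map $\mathrm{ev}_w\colon G^r\to G$, $\phi\mapsto\phi(w)$. An automorphism $\theta\in\Aut(\F_r)$ with $\theta(w_1)=w_2$ acts on $\Hom(\F_r,G)$ by precomposition, $\theta^{*}\colon\phi\mapsto\phi\circ\theta$; under the identification with $G^r$ this is the map sending $\left(g_1,\ldots,g_r\right)$ to the tuple whose $i$-th coordinate is the word $\theta(x_i)$ evaluated at $\left(g_1,\ldots,g_r\right)$.

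First I would record the key identity $\mathrm{ev}_{w_2}=\mathrm{ev}_{w_1}\circ\theta^{*}$, which is immediate from $\phi(\theta(w_1))=(\phi\circ\theta)(w_1)$. Consequently the $w_2$-measure equals $(\mathrm{ev}_{w_1})_{*}\big((\theta^{*})_{*}\mathrm{Haar}\big)$, so the entire statement reduces to the claim that $\theta^{*}$ preserves the Haar measure on $G^r$. Substituting this back immediately yields the $w_1$-measure.

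To prove that reduction, I would invoke Nielsen's theorem that $\Aut(\F_r)$ is generated by the elementary Nielsen transformations, together with the facts that $\theta\mapsto\theta^{*}$ reverses composition and that a composition of measure-preserving maps is measure-preserving; hence it suffices to treat a single elementary transformation at a time. Each case is then a one-line appeal to the invariance properties of Haar measure on the compact group $G$: the inversion $x_i\mapsto x_i^{-1}$ induces $g_i\mapsto g_i^{-1}$, which preserves Haar because Haar measure on a compact group is inversion-invariant; the transposition $x_i\leftrightarrow x_j$ induces a coordinate swap, preserved because the product measure is symmetric; and the transvection $x_i\mapsto x_ix_j$ induces $\left(g_1,\ldots,g_r\right)\mapsto\left(\ldots,g_ig_j,\ldots\right)$, which preserves Haar by Fubini's theorem, integrating over $g_i$ first and using right-invariance of Haar on $G$.

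There is no genuine obstacle here; this is a routine structural fact (indeed it is quoted from \cite[Fact 2.5]{MP}). The only points requiring care are the bookkeeping in the push-forward identity --- in particular making sure that precomposition by $\theta$ lands on the correct side --- and the appeal to Nielsen's generation theorem, which is precisely what licenses the reduction to the three elementary moves.
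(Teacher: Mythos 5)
Your proof is correct, and it follows exactly the route the paper has in mind: the paper does not spell out a proof but justifies the Fact by noting that elementary Nielsen transformations do not change word measures (citing \cite[Fact 2.5]{MP}), which is precisely your reduction via Nielsen's generation theorem plus the invariance of Haar measure under inversion, coordinate swaps, and right translation.
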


For example, $xyxy^{-1}\auteq x^{2}y^{2}$ and so the expected value
of $\psi$ under the $xyxy^{-1}$-measure is $\frac{\left(\fs_{\psi}\right)^{2}}{\psi\left(e\right)}$.
More generally, every non-trivial word $w\in\F_{r}$ in which every
letter appears exactly twice is mapped by $\mathrm{Aut}\big(\F_{r}\big)$
to one of the words in Theorem \ref{thm:Frobenius}. By the classification
of surfaces, the suitable word is determined by the homeomorphism
type of the surface obtained from gluing the sides of a $\left|w\right|$-gon
according to the letters of $w$ (see, for instance, \cite[Chapter 1.3]{stillwell2012classical}).
The expected value of irreducible characters is then described by
the suitable case in the theorem.

Several mathematicians, including A.~Amit, T.~Gelander, A.~Lubotzky,
A.~Shalev and U.~Vishne, conjecture that the converse is also true
and that every other pair of words is ``measurably separable'':
\begin{conjecture}
\label{conj:shalev minus}Let $w_{1},w_{2}\in\F_{r}$. If $w_{1}$
and $w_{2}$ induce the same measure on every compact group, then
$w_{1}\auteqr w_{2}$.
\end{conjecture}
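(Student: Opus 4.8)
The plan is to reconstruct the $\Aut(\F_r)$-orbit of a word $w$ from its word measures by peeling off progressively finer $\Aut$-invariants from the asymptotic behaviour of these measures on a short list of well-understood families of compact groups. First I would observe that equality of word measures on all compact groups is equivalent to the equalities $\E\big[\psi(w_1)\big]=\E\big[\psi(w_2)\big]$ for every irreducible character $\psi$ of every compact group (word measures being conjugation-invariant, expected character values determine them). Specializing $\psi$ to the natural permutation, standard, and related characters of $S_N$, $U(N)$, $O(N)$ and the generalized symmetric groups $\cmsn$ then forces \emph{all} of the $N$- and $m$-asymptotic data of $w_1$ and $w_2$ on these families to agree. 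The goal is to prove that, taken together, these asymptotics constitute a \emph{complete} invariant, so that any two words with identical such data satisfy $w_1\auteqr w_2$.

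The symmetric groups supply the coarse combinatorial skeleton. By the theory of word measures on $S_N$ (Nica; Puder--Parzanchevski; Hanany--Puder), the expected number of fixed points of a $w$-random permutation equals $1$ plus a correction of leading order $N^{-(\pi(w)-1)}$, where $\pi(w)$ is the primitivity rank --- the least rank of an algebraic extension $\langle w\rangle\alg H\le\F_r$ in which $w$ fails to be primitive --- and the leading coefficient counts the \emph{critical} such extensions. Incorporating the fixed-point statistics of the images of $w^{d}$ and the counts of longer cycles recovers the full poset of algebraic extensions of $\langle w\rangle$ together with their ranks and Euler characteristics. Since $\Aut(\F_r)$ acts compatibly on this poset, I would distil from the $S_N$-measures an $\Aut$-invariant recording the Stallings core-graph data of $w$ modulo $\Aut(\F_r)$.

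The classical groups refine this skeleton with genuinely two-dimensional information. The companion analyses of word measures on $U(N)$ and $O(N)$ show that the leading terms of $\E\big[\tr(W^{d})\big]$ are governed by the minimal Euler characteristics of the surfaces bounding $w$ and its powers --- quantities tied to its commutator length --- while the discrepancy between the $U(N)$ and $O(N)$ expansions detects orientability through the Frobenius--Schur mechanism behind Theorem~\ref{thm:Frobenius}. The generalized symmetric groups $\cmsn$, analysed here, interpolate between the permutation and abelian regimes: sending $m\to\infty$ isolates the abelianized word $\bar w\in\mathbb Z^{r}$ from its commutator structure and weights each edge of the core graph by its $C_m$-holonomy, refining the critical-subgroup count. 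Assembling these steps, I would attach to $w$ a marked-surface invariant that is visibly preserved by $\Aut(\F_r)$.

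The hard part will be the final \emph{rigidity} step: showing this assembled invariant is complete, i.e.\ that two words with identical abelianization, identical algebraic-extension poset with Euler characteristics, and identical orientability data must satisfy $w_1\auteqr w_2$. This is precisely where the conjecture remains open. Each individual invariant is wildly non-injective --- the primitivity rank alone, or the minimal Euler characteristic alone, is shared by enormous families of pairwise $\Aut$-inequivalent words --- so one must control the \emph{entire} asymptotic expansions and rule out their coincidence for $\Aut$-inequivalent pairs, and the higher-order coefficients are not known in closed form. The realistic reduction, and the one this paper carries out, is to settle the conjecture when one of $w_1,w_2$ is a \emph{surface word}: surface words are characterized by saturating the Frobenius/Frobenius--Schur values of Theorem~\ref{thm:Frobenius} (equivalently, by minimizing Euler characteristic for their length and abelianization), and this extremal rigidity --- unavailable for generic words --- lets the partial invariants above already force $\Aut$-equivalence. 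Pushing such extremality arguments beyond surface words is the principal route I would pursue toward the full conjecture.
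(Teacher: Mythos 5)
The statement you were asked to prove is Conjecture \ref{conj:shalev minus}, and the first thing to say is that the paper itself does \emph{not} prove it: it is recorded there as an open conjecture (attributed to Amit, Gelander, Lubotzky, Shalev and Vishne), described as ``extremely challenging'', and the paper's actual contribution, Theorem \ref{thm:surface words are separable}, is the special case where one of the two words is a surface word. Your proposal, to its credit, recognizes exactly this: you assemble invariants from word measures on $S_N$, $\U\left(N\right)$, $\O\left(N\right)$ and $\cmsn$, and then concede that the final ``rigidity'' step --- showing that these assembled invariants form a \emph{complete} invariant of the $\Aut\big(\F_{r}\big)$-orbit --- is open. That concession is fatal as far as proving the statement goes: the completeness step is not a technical lemma to be deferred, it \emph{is} the conjecture. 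Everything preceding it is a (largely accurate) survey of what the measures are known to detect, and, as you yourself note, each of those detected quantities is wildly non-injective; no argument is offered, nor is one currently known, that their totality separates $\Aut$-inequivalent words.

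There are also concrete over-claims in your intermediate steps. The assertion that fixed-point statistics of $w^{d}$ and longer-cycle counts on $S_N$ ``recover the full poset of algebraic extensions of $\left\langle w\right\rangle$ together with their ranks'' is not established anywhere; what Theorem \ref{thm:pp15, trace of words} (i.e.\ \cite[Theorem 1.8]{PP15}) gives is the primitivity rank $\pi\left(w\right)$ and the number of critical extensions of that single rank, and extracting deeper coefficients of the rational expression in an $\Aut$-invariant form is itself open. Likewise, the claim that surface words are ``characterized by saturating the Frobenius--Schur values of Theorem \ref{thm:Frobenius}'' is precisely the content of Theorem \ref{thm:surface words are separable}, so it cannot be invoked as a known characterization without circularity. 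Where your outline touches ground that can genuinely be proven --- the surface-word case --- it does match the paper's route: $\U\left(N\right)$ and $\O\left(N\right)$ give upper bounds on $\cl\left(w\right)$ and $\sql\left(w\right)$ (Theorems \ref{thm:Un} and \ref{thm:On}), the generalized symmetric groups give matching lower bounds via $\ecm\left(w\right)$ (Theorem \ref{thm:trace-of-generalized-permutations - baby version} and Corollary \ref{cor:lower bound for cl,sq}), and uniqueness of the relevant algebraic extension (Corollary \ref{cor:trw=00003Dn^chi}) upgrades the witnessing words to part of a basis of $\F_{r}$. But as a proof of the conjecture as stated, the proposal has a genuine --- and acknowledged --- gap.
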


This conjecture appears in the literature in a stronger form, where
$w_{1}$ and $w_{2}$ are only assumed to induce the same measure
on every \emph{finite }group -- see \cite[Question 2.2]{Amit2011},
\cite[Conjecture 4.2]{Shalev2013} and \cite[Section 8]{PP15}. 

Conjecture \ref{conj:shalev minus} seems to be extremely challenging.
Our focus here, instead, is on special cases, where $w_{1}$ is some
fixed word. A case which attracted considerable attention was that
of primitive words, namely the $\mathrm{Aut}\big(\F_{r}\big)$-orbit
containing the free generators of $\F_{r}$. This special case was
settled by the second author and Parzanchevski \cite{PP15}, who showed
that $w$ induces the uniform measure on the symmetric group $S_{N}$
for all $N$ if and only if $w$ is primitive. To the best of our
knowledge, the only $\Aut\big(\F_{r}\big)$-orbits for which the expected
value of irreducible characters have a simple explicit formula for
every compact group, are the primitive case (where all characters
but the trivial one have expectation zero) and surface words as in
Theorem \ref{thm:Frobenius} -- see also \cite{parzanchevski2014fourier}
and the references therein. In this sense, surface words are a natural
next case to consider. And, indeed, we settle Conjecture \ref{conj:shalev minus}
when $w_{1}$ is a surface word:
\begin{thm}
\label{thm:surface words are separable}Let $w\in\F_{r}$.
\begin{enumerate}
\item If $w$ induces the same measure as $\left[x_{1},y_{1}\right]\cdots\left[x_{g},y_{g}\right]$
on every compact group, then ($r\ge2g$, and) $w\auteqr\left[x_{1},y_{1}\right]\cdots\left[x_{g},y_{g}\right]$.
\item If $w$ induces the same measure as $x_{1}^{2}\cdots x_{g}^{2}$ on
every compact group, then ($r\ge g$, and) $w\auteqr x_{1}^{2}\cdots x_{g}^{2}$.
\end{enumerate}
\end{thm}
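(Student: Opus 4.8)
The plan is to exploit the fact that the word measures on the two most informative families of compact groups---the unitary groups $U(N)$ and the orthogonal/symplectic groups---have asymptotic expansions governed by the topology of the word. For a word $w$, the expected trace (and more refined expected values of moments) of a $w$-random element of $U(N)$ admits an expansion in powers of $\frac{1}{N}$ whose leading term is controlled by an Euler-characteristic-type invariant of $w$, often formulated through the \emph{primitivity rank} or through the genus of an optimal surface bounding $w$. Since we may assume $w$ induces the same measure as the target surface word on every compact group, in particular on every $U(N)$ and every $O(N)$, the full asymptotic expansion of these word measures must coincide. The strategy is thus to read off, from this forced agreement of expansions, enough algebraic/topological invariants of $w$ to pin down its $\Aut(\F_r)$-orbit.

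\medskip

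\noindent\textbf{Key steps.} First I would recall the main theorems from the companion papers on word measures on $U(N)$ and $O(N)$: these give, for any $w$, the first one or two terms of the expansion of the relevant expected values, expressed via the Euler characteristic $\chi$ of the quotient core graph or via the number of ``algebraic extensions'' of $w$ realizing the optimal genus. The point is that a genuine surface word $w_0 = \prod_{i=1}^g [x_i,y_i]$ (resp.\ $\prod_i x_i^2$) is characterized among all words by attaining a specific extremal value of this topological invariant \emph{together with} uniqueness of the optimal realization. Concretely, I would (i) use the unitary expansion to force $w$ to have the same commutator-length/genus defect as $w_0$, which forces $w$ to lie in the commutator subgroup (resp.\ to have appropriate image in the abelianization mod $2$) and to have genus exactly $g$; (ii) use the precise second-order term---the count of surfaces or of algebraic extensions achieving the extremum---to force that $w$ admits a \emph{unique} optimal genus-$g$ surface, which is the combinatorial signature of being in the orbit of $w_0$; and (iii) for the case $\prod x_i^2$, additionally invoke the word measures on the generalized symmetric groups $C_m \wr S_N$ developed in this paper to detect the parity/orientability data (the Frobenius--Schur-type information) that distinguishes $\prod x_i^2$ from $\prod [x_i,y_i]$ and from non-surface words with the same unitary behavior. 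Finally, I would translate ``unique optimal genus-$g$ surface'' into a statement about the structure of $w$ as an element of $\F_r$ and apply the classification of surfaces to conclude $w \auteqr w_0$, which simultaneously yields the rank bound ($r \ge 2g$, resp.\ $r\ge g$) since $w_0$ cannot be expressed using fewer generators.

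\medskip

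\noindent\textbf{Main obstacle.} The hard part will be step (ii): having the correct \emph{leading} asymptotics only shows $w$ shares the genus $g$ with $w_0$, but many inequivalent words can share a genus. The delicate point is to show that the \emph{exact} word-measure data (not just leading order) forces the core graph / optimal surface of $w$ to be rigid---i.e.\ that the surface realizing the genus is unique and primitive in the appropriate sense---so that no ``extra'' algebraic extensions contribute. This is where the finer analysis of the subleading terms of the unitary and orthogonal expansions, combined with the generalized symmetric group computations, must be pushed far enough to separate $w$ from every competitor sharing its first-order profile; I expect this rigidity argument, rather than the topological bookkeeping, to carry the real weight of the proof.
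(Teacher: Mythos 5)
Your proposal runs in the same general direction as the actual proof---leading-order asymptotics pin down the genus, a second-order/uniqueness statement provides rigidity---and you correctly isolate the rigidity step as the crux. But as written the plan has a genuine gap precisely there, and it also misallocates which family of groups can deliver which piece of information. The results imported from the companion papers on $\U\left(N\right)$ and $\O\left(N\right)$ (Theorems \ref{thm:Un} and \ref{thm:On}) are one-sided upper bounds: $\trw\left(\U\left(N\right)\right)=O\left(N^{1-2\cl\left(w\right)}\right)$ and $\trw\left(\O\left(N\right)\right)=O\left(N^{1-\min\left(\sql\left(w\right),2\cl\left(w\right)\right)}\right)$. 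From $\trw\left(\U\left(N\right)\right)=N^{1-2g}$ one can therefore only conclude $\cl\left(w\right)\le g$ (and hence $w\in\left[\F_{r},\F_{r}\right]$); your step (i), which wants the unitary expansion to force the genus to equal $g$, does not follow from these inputs, and no second-order expansion of the unitary or orthogonal measures of the kind your step (ii) requires is available here. In the paper, both the matching lower bound and all the rigidity come from the generalized symmetric groups, which you relegate to a side role (orientability detection in case (2)). Concretely: the trace of $\ssn$ is an irreducible $N$-dimensional character, so $\trw\left(\ssn\right)=N^{1-2g}$ exactly; Theorem \ref{thm:trace-of-generalized-permutations - baby version} then gives $\eci\left(w\right)=1-2g$, and writing $w$ as an optimal product of $\cl\left(w\right)$ commutators whose entries generate $J\le\F_{r}$ yields $1-2g=\eci\left(w\right)\ge1-\mathrm{rank}\left(J\right)\ge1-2\cl\left(w\right)$, i.e.\ $\cl\left(w\right)\ge g$, with equality throughout.

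The rigidity mechanism is then Theorem \ref{thm:two min rank} and its Corollary \ref{cor:trw=00003Dn^chi}: $\trw\left(G\left(N\right)\right)$ is an exact monomial $N^{\chi}$ if and only if $\left|\ae_{m}\left(w\right)\right|=1$. Since the chain of equalities above shows $J\in\ae_{\infty}\left(w\right)$, this uniqueness forces $J\ff\F_{r}$: otherwise the unique intermediate subgroup $A$ with $J\lvertneqq_{\mathrm{alg}}A\ff\F_{r}$ would be a second element of $\ae_{\infty}\left(w\right)$ (using transitivity of algebraic extensions and $K_{\infty}\left(J\right)\le K_{\infty}\left(A\right)$). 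Finally, since $\mathrm{rank}\left(J\right)=2g$, the $2g$ elements $u_{1},v_{1},\ldots,u_{g},v_{g}$ form a basis of $J$, which extends to a basis of $\F_{r}$, giving $r\ge2g$ and $w\auteqr\left[x_{1},y_{1}\right]\cdots\left[x_{g},y_{g}\right]$ directly; the classification of surfaces you invoke at the end plays no role and cannot substitute for this basis-extension argument. In case (2) the same corrections apply: $w\notin\left[\F_{r},\F_{r}\right]$ is read off from $\trw\left(\ssn\right)=0$ (Frobenius--Schur indicator $0$), the orthogonal groups give only $\sql\left(w\right)\le g$, and the lower bound $\sql\left(w\right)\ge g$ together with the rigidity again come from the wreath product $\ctsn$ via $\ect\left(w\right)$ and $\left|\ae_{2}\left(w\right)\right|=1$. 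So the wreath products are not an add-on for parity detection---they are the engine of both cases, and without them (or some concrete replacement for your step (ii)) the argument does not close.
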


\begin{rem}
Let us mention another new result in the same spirit as Theorem \ref{thm:surface words are separable}.
Let $w_{0}$ be either a primitive power, say $w_{0}=x_{1}^{m}$,
or any power of the simple commutator $w_{0}=\left[x,y\right]^{m}$.
In a forthcoming paper \cite{Hanani}, Hanani, Meiri and the second
author show that if a word $w$ induces the same measure as $w_{0}$
on every \emph{finite} group, then $w\auteqr w_{0}$. In the case
of the simple commutator $\left[x,y\right]$, this strengthens Theorem
\ref{thm:surface words are separable}, as it only relies on measures
on finite groups. On the other hand, unlike the current paper where
we use specific families of groups ($\U\big(N\big)$ and generalized
permutation groups), the finite groups relied upon in \cite{Hanani}
are not explicit.
\end{rem}

\subsection*{Word measures on $\protect\U\left(N\right)$, on $\protect\O\left(N\right)$,
and on generalized symmetric groups}

Our proof of Theorem \ref{thm:surface words are separable} relies
on the measures induced by words on unitary groups, on orthogonal
groups, and on generalized symmetric groups. While we study the former
two in separate papers, we study the latter one here. In fact, it
is enough to consider the expected value of the standard character,
namely, the expected value of the trace, in all three families of
groups, of the ``defining representation''\footnote{Similarly, the standard character of the symmetric group was sufficient
for the result in \cite{PP15} considering the primitive orbit.}. We denote the expected value of the trace of a $w$-random element
in a matrix group $G$ by $\trw\left(G\right)$\marginpar{$\protect\trw\left(G\right)$}.

To present our results, we introduce some notation. Recall that given
a free group $\F$, the commutator subgroup $\left[\F,\F\right]$
is the kernel of the homomorphism $\F\twoheadrightarrow\mathbb{Z}^{\mathrm{rank}\left(\F\right)}$
mapping every generator to a different element of the standard generating
set of $\mathbb{Z}^{\mathrm{rank}\left(\F\right)}$. Similarly, \foreignlanguage{english}{for
$m\in\mathbb{Z}_{\ge2}$, let $C_{m}\stackrel{\mathrm{def}}{=}\nicefrac{\mathbb{Z}}{m\mathbb{Z}}$
be the cyclic group of order $m$}, and denote by\marginpar{$K_{m}\left(\protect\F\right)$}
\[
K_{m}\left(\F\right)\stackrel{\mathrm{def}}{=}\ker\left(\F\twoheadrightarrow C_{m}^{~\mathrm{rank}\left(\F\right)}\right)
\]
the kernel of the homomorphism $\F\twoheadrightarrow C_{m}^{~\mathrm{rank}\left(\F\right)}$
mapping every generator to a different element of a standard generating
set of $C_{m}^{~\mathrm{rank}\left(\F\right)}$. Note that even though
the homomorphism $\F\twoheadrightarrow C_{m}^{~\mathrm{rank}\left(\F\right)}$
depends on the choice of basis of $\F$, its kernel does not, and
it consists of all words where $m$ divides the total exponent of
every generator. For efficiency of presenting our results, we also
denote $K_{\infty}\left(\F\right)\stackrel{\mathrm{def}}{=}\left[\F,\F\right]$\marginpar{$K_{\infty}\left(\protect\F\right)$}.

Is it a standard fact that $w\in\F$ is a product of squares if and
only if $w\in K_{2}\left(\F\right)$. Likewise, $w$ is a product
of commutators if and only if $w\in K_{\infty}\left(\F\right)=\left[\F,\F\right]$.\footnote{These facts about $K_{2}\left(\F\right)$ and $K_{\infty}\left(\F\right)$
do not generalize to $K_{m}\left(\F\right)$ for $2\le m<\infty$.} We can now extract from \cite{MP-Un,MP-On} the results we need here.
\begin{defn}
\label{def:cl}Let $w\in\F_{r}$. The \emph{commutator length} of
$w\in\F_{r}$ is defined as\marginpar{$\protect\cl\left(w\right)$}
\[
\cl\left(w\right)\stackrel{\mathrm{def}}{=}\min\left\{ g\,\middle|\,\begin{gathered}\exists u_{1},v_{1},\ldots,u_{g},v_{g}\in\F_{r}~\mathrm{s.t.}\\
w=\left[u_{1},v_{1}\right]\cdots\left[u_{g},v_{g}\right]
\end{gathered}
\right\} .
\]
In particular, if $w\notin\left[\F_{r},\F_{r}\right]$, then $\cl\left(w\right)=\infty$.
Similarly, the \emph{square length} of $w\in\F_{r}$ is defined as\marginpar{$\protect\sql\left(w\right)$}
\[
\sql\left(w\right)\stackrel{\mathrm{def}}{=}\min\left\{ g\,\middle|\,\begin{gathered}\exists u_{1},\ldots,u_{g}\in\F_{r}~\mathrm{s.t.}\\
w=u_{1}^{2}\cdots u_{g}^{2}
\end{gathered}
\right\} .
\]
In particular, if $w\notin K_{2}\left(\F_{r}\right)$, then $\sql\left(w\right)=\infty$.
\end{defn}

\begin{thm}
\cite[Corollary 1.8]{MP-Un}\label{thm:Un} Fix $w\in\F_{r}$ and
consider the measure it induces on the unitary groups $\U\left(N\right)$.
The expected trace of a $w$-random unitary matrix in $\U\left(N\right)$
satisfies 
\[
\trw\left(\U\left(N\right)\right)=O\left(N^{1-2\cdot\cl\left(w\right)}\right).
\]
\end{thm}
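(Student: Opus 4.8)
The plan is to prove the upper bound directly through Weingarten calculus, identifying the exponent of $N$ in each contribution with the Euler characteristic of an associated surface and then controlling that exponent by the commutator length. First I would dispose of the degenerate case $w\notin\left[\F_{r},\F_{r}\right]$, in which $\cl\left(w\right)=\infty$ and the asserted estimate reads $\trw\left(\U\left(N\right)\right)=0$. Here some generator $x_{i}$ has nonzero total exponent $d_{i}$, and replacing each Haar-random $U_{j}$ by $z_{j}U_{j}$ for independent phases $z_{j}\in\U\left(1\right)$ leaves the law unchanged while multiplying $\tr\left(w\right)$ by $\prod_{j}z_{j}^{d_{j}}$; averaging over the phases forces the expectation to vanish. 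So I may assume $w\in\left[\F_{r},\F_{r}\right]$ and set $g=\cl\left(w\right)<\infty$, noting that each generator $x_{k}$ then occurs with positive and with negative exponent the same number of times, say $p_{k}$.

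Writing $\tr\left(w\right)$ as a sum over a single closed index loop running along the $\left|w\right|$ letters of $w$, the expected trace factors over the generators into integrals of the shape $\int_{\U\left(N\right)}\prod\left(U_{k}\right)_{i_{a}j_{a}}\prod\overline{\left(U_{k}\right)}_{i'_{b}j'_{b}}\,dU_{k}$. Each such integral is expressed by the Weingarten formula as a sum over pairs $\sigma_{k},\tau_{k}\in S_{p_{k}}$ of $\wg\left(\tau_{k}^{-1}\sigma_{k},N\right)$ times a product of Kronecker deltas matching incoming to outgoing indices. Performing all delta-contractions leaves a factor $N^{c}$, where $c$ counts the cycles of a permutation built from the cyclic order of the letters of $w$ together with the chosen $\sigma_{k},\tau_{k}$. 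Inserting the asymptotics $\wg\left(\pi,N\right)=N^{-p-\left|\pi\right|}\left(1+O\left(N^{-2}\right)\right)$, where $\left|\pi\right|$ is the least number of transpositions in $\pi$, the tuple $\left(\sigma_{k},\tau_{k}\right)_{k}$ contributes a term of leading order $N^{e}$ with $e=c-\sum_{k}\left(p_{k}+\left|\tau_{k}^{-1}\sigma_{k}\right|\right)$, every correction only lowering the power of $N$.

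The decisive step is to read $e$ topologically. The tuple $\left(\sigma_{k},\tau_{k}\right)_{k}$ is precisely the data of a pairing of the occurrences of each generator, and hence prescribes a compact oriented surface $\Sigma$ with a single boundary component, equipped with a map to $\wedger$ under which $\partial\Sigma$ traces the loop $w$. I would verify that $e=\chi\left(\Sigma\right)$, the terms $p_{k}$ and $c$ accounting for edges and faces and the defect $\left|\tau_{k}^{-1}\sigma_{k}\right|$ measuring the deviation from the planar pairing. Since $\Sigma$ has one boundary circle, $\chi\left(\Sigma\right)=1-2\genus\left(\Sigma\right)$, and the classical identification of commutator length with minimal bounding genus gives $\genus\left(\Sigma\right)\ge\cl\left(w\right)=g$ for every admissible pairing. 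Hence $e\le1-2g$ uniformly, and, the Weingarten expansion being a finite sum, $\trw\left(\U\left(N\right)\right)=O\left(N^{1-2\cl\left(w\right)}\right)$.

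I expect the main obstacle to lie in this last step: matching the Weingarten exponent $e$ to $\chi\left(\Sigma\right)$ with the correct bookkeeping of vertices, edges and faces, and, above all, establishing the genus lower bound $\genus\left(\Sigma\right)\ge\cl\left(w\right)$ for \emph{every} pairing rather than merely for the optimal one. This is exactly where the structure of $w$ as an element of the commutator subgroup is used, and it is the combinatorial heart of the estimate.
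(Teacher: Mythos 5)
There is no internal proof to compare against: this paper never proves Theorem \ref{thm:Un} -- it is imported as \cite[Corollary 1.8]{MP-Un} and used as a black box (what is proved here concerns the generalized symmetric groups). So your sketch must be measured against the companion paper \cite{MP-Un}, and it is in fact a reconstruction of that paper's strategy: Weingarten calculus, reorganization of the expansion into surfaces mapped to $\wedger$ with boundary tracing $w$, and the Euler-characteristic bound $\chi\left(\Sigma\right)\le1-2\cl\left(w\right)$. Your degenerate case is complete and correct: invariance of Haar measure under $U_{j}\mapsto z_{j}U_{j}$ forces $\trw\left(\U\left(N\right)\right)=0$ whenever some generator has nonzero total exponent, which is the right reading of the statement when $\cl\left(w\right)=\infty$. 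The reduction of the main case to bounding the exponent $e=c-\sum_{k}\left(p_{k}+\left|\tau_{k}^{-1}\sigma_{k}\right|\right)$ of each Weingarten term is also set up correctly.

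The genuine gap is the step you flag but do not carry out: producing, from an \emph{arbitrary} tuple $\left(\sigma_{k},\tau_{k}\right)_{k}$, a compact \emph{oriented} surface $\Sigma$ with a \emph{single} boundary component, together with a map $\Sigma\to\wedger$ restricting to $w$ on $\partial\Sigma$, such that $e=\chi\left(\Sigma\right)$ -- and doing the same for the subleading corrections of $\wg$, which correspond to surfaces of still smaller Euler characteristic. This construction, with its bookkeeping of vertices, edges, faces, orientations and boundary components, is essentially the entire content of \cite{MP-Un}; none of orientability, connectedness of the boundary, or $e=\chi\left(\Sigma\right)$ is automatic. (For instance, the analogous construction over $\O\left(N\right)$ produces non-orientable surfaces as well, which is precisely why Theorem \ref{thm:On} involves $\sql\left(w\right)$.) By contrast, the second difficulty you name -- proving $\genus\left(\Sigma\right)\ge\cl\left(w\right)$ ``for every pairing rather than merely for the optimal one'' -- is no difficulty at all once the construction exists: in $\pi_{1}$ of a genus-$h$ surface with one boundary component, the boundary class equals $\prod_{i=1}^{h}\left[a_{i},b_{i}\right]$, so any map to $\wedger$ carrying $\partial\Sigma$ to $w$ exhibits $w$ as a product of $h$ commutators in $\F_{r}$, whence $h\ge\cl\left(w\right)$ pairing by pairing; this is the classical fact going back to Culler. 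Two minor slips: the Weingarten asymptotics read $\wg\left(\pi,N\right)=N^{-p-\left|\pi\right|}\left(\moeb\left(\pi\right)+O\left(N^{-2}\right)\right)$ with leading coefficient the M\"obius function, not $1$ (harmless here, since only orders of magnitude enter), and you should record that the number of tuples $\left(\sigma_{k},\tau_{k}\right)_{k}$ is finite and independent of $N$, so the termwise bound survives summation.
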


\begin{thm}
\cite[Corollary 1.10]{MP-On}\label{thm:On} Fix $w\in\F_{r}$ and
consider the measure it induces on the orthogonal groups $\O\left(N\right)$.
The expected trace of a $w$-random orthogonal matrix in $\O\left(N\right)$
satisfies 
\[
\trw\left(\O\left(N\right)\right)=O\left(N^{1-\min\left(\sql\left(w\right),2\cl\left(w\right)\right)}\right).
\]
\end{thm}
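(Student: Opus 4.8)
The plan is to reduce $\trw\left(\O\left(N\right)\right)$ to a finite combinatorial sum via orthogonal Weingarten calculus, to read off the power of $N$ attached to each term as the Euler characteristic of an associated (possibly non-orientable) surface bounding $w$, and finally to identify the largest such Euler characteristic with $1-\min\left(\sql\left(w\right),2\cl\left(w\right)\right)$ through the topological meaning of commutator length and square length. This mirrors the unitary bound of Theorem \ref{thm:Un}, with the new feature that non-orientable surfaces enter the count.

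First I would expand the trace. Writing $w$ as a reduced word of length $L$ and substituting an independent Haar-random $O_{i}\in\O\left(N\right)$ for each generator $x_{i}$, I expand $\mathrm{tr}\,w\left(O_{1},\ldots,O_{r}\right)$ over its matrix indices around the cyclic word. Because the $O_{i}$ are independent, the expectation factorizes over generators, and for each $x_{i}$ the Haar expectation of the resulting monomial in the entries of $O_{i}$ and $O_{i}^{T}$ is given by the orthogonal Weingarten formula: a sum over pairs of perfect matchings of the $2k_{i}$ occurrences of $x_{i}$ (one matching the row indices, one the column indices) weighted by the orthogonal Weingarten function $\wg^{\O\left(N\right)}$. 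Collecting everything, $\trw\left(\O\left(N\right)\right)$ becomes a sum indexed by a tuple of matchings, one per generator, each term being a product of Weingarten weights times $N^{c}$, where $c$ counts the closed index loops produced when the matchings are contracted against the cyclic backbone of $w$.

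Next I would interpret each term topologically. A tuple of matchings prescribes how to glue, in pairs, the edges of a single $L$-gon whose boundary spells $w$, producing a compact surface $\Sigma$ with one boundary component. Because the matrices are real there is no complex conjugate whose slots must be matched against, so two occurrences of the \emph{same} generator may be paired regardless of their orientation along the word; such same-orientation pairings glue edges in an orientation-reversing way, and therefore $\Sigma$ may be non-orientable. This is precisely the phenomenon absent from the unitary setting of Theorem \ref{thm:Un}, where each $U_{i}$-slot must pair with a $\overline{U_{i}}$-slot and all gluings are orientable. The index loops, paired edges, and the single polygonal face assemble into a cell structure on $\Sigma$, so that — after merging the $N^{c}$ factor with the leading asymptotics of $\wg^{\O\left(N\right)}$ — the net power of $N$ contributed by the term equals, to leading order, the Euler characteristic $\chi\left(\Sigma\right)$, the dominant contributions coming from genus-minimizing (that is, $\chi$-maximizing) matchings.

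Finally I would optimize and translate. The leading exponent is $\max_{\Sigma}\chi\left(\Sigma\right)$ over all compact surfaces $\Sigma$ with one boundary circle carrying a map to a bouquet of $r$ circles that sends the boundary loop to $w$. An orientable surface of genus $g$ has $\chi=1-2g$, and the least such $g$ realizing $w$ is exactly $\cl\left(w\right)$; a non-orientable surface with $g$ cross-caps has $\chi=1-g$, and the least such $g$ is exactly $\sql\left(w\right)$. Hence
\[
\max_{\Sigma}\chi\left(\Sigma\right)=\max\left(1-2\cl\left(w\right),\,1-\sql\left(w\right)\right)=1-\min\left(\sql\left(w\right),2\cl\left(w\right)\right),
\]
which is the claimed exponent; when $w\notin K_{2}\left(\F_{r}\right)$ both lengths are infinite, matching the faster-than-polynomial decay. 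The main obstacle is the analytic control underlying the third paragraph. Unlike the unitary case, where the Weingarten function is indexed by a single permutation and the correspondence with orientable surfaces is clean, the orthogonal Weingarten function is indexed by pairs of matchings and its magnitude and sign are governed by the subtler combinatorics of the Brauer algebra. I expect the real work to be a uniform bound on the subleading Weingarten terms — ruling out that cancellations or an accumulation of lower-order terms ever upgrade the power of $N$ — together with a careful proof of the dictionary sending a tuple of matchings to $\chi\left(\Sigma\right)$, including the bookkeeping that makes the non-orientable gluings responsible for the $\sql\left(w\right)$ term.
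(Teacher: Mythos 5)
This theorem is not proved in the paper you are working from: it is imported verbatim as a black box, cited as \cite[Corollary 1.10]{MP-On}, and the present paper only \emph{uses} it (in the non-orientable case of Theorem \ref{thm:surface words are separable}). So there is no in-paper proof to compare against; what your outline actually reconstructs is the strategy of the companion paper \cite{MP-On} itself, and it does so faithfully: expansion of $\trw\left(\O\left(N\right)\right)$ via the orthogonal Weingarten calculus indexed by tuples of pairings, reinterpretation of each tuple as a gluing of the $\left|w\right|$-gon into a compact surface with one boundary component spelling $w$ (non-orientable gluings now allowed, which is exactly what distinguishes $\O\left(N\right)$ from $\U\left(N\right)$), a term-wise exponent equal to the Euler characteristic, and the translation $\max_{\Sigma}\chi\left(\Sigma\right)=1-\min\left(\sql\left(w\right),2\cl\left(w\right)\right)$, which is the correct topological dictionary (orientable genus $g$ filling $\Leftrightarrow$ $w$ a product of $g$ commutators, $g$ cross-caps $\Leftrightarrow$ $w$ a product of $g$ squares).

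As a self-contained proof, however, your proposal has a genuine gap, and you have named it yourself: the entire analytic content lies in the third paragraph, namely the uniform statement that each Weingarten term contributes at most $N^{\chi\left(\Sigma\right)}$ once the asymptotics of $\wg^{\O\left(N\right)}$ are merged with the loop count $N^{c}$. That step is precisely what \cite{MP-On} establishes, and it is the hard part; without it the argument is a plan, not a proof. One correction to your framing of that difficulty: for the stated $O\left(\cdot\right)$ bound you do not need to rule out cancellations or an ``accumulation'' of lower-order terms, since the number of terms in the Weingarten expansion depends only on $\left|w\right|$ and not on $N$; a term-wise exponent bound suffices. Cancellation only becomes relevant if one wants matching lower bounds (a $\Theta$ statement), which is not claimed here.
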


In the current paper we obtain similar results for generalized symmetric
groups. Specifically, \foreignlanguage{english}{let $\ssn$}\marginpar{\selectlanguage{english}%
$\protect\ssn$\selectlanguage{american}%
}\foreignlanguage{english}{ denote the wreath product of ${\cal S}^{1}$
with $S_{N}=\mathrm{Sym}\left(N\right)$, namely, this is the subgroup
of $\mathrm{GL}_{N}\left(\mathbb{C}\right)$ consisting of matrices
with exactly one non-zero entry in every row and column and all non-zero
entries having absolute value $1$. Likewise, for $m\in\mathbb{Z}_{\ge2}$,
let $\cmsn$}\marginpar{$\protect\cmsn$}\foreignlanguage{english}{
be the wreath product}\footnote{\selectlanguage{english}%
The group $\cmsn$ is sometimes denoted $S\left(m,N\right)$ -- see,
for example,\\
https://en.wikipedia.org/wiki/Generalized\_symmetric\_group.\selectlanguage{american}%
}\foreignlanguage{english}{ of $C_{m}$, the cyclic group of order
$m$, with $S_{N}$. This is the subgroup of $\ssn$ where all non-zero
entries are $m$-th roots of unity. Note that when $m=2$, the group
$C_{2}\wr S_{N}$ is the signed symmetric group, known also as the
hyper-octahedral group or the Coxeter group of type $B_{N}=C_{N}$.}

The first observation we make is that the expected value under the
$w$-measure of the trace of any of these groups is given by a rational
function in $N$:
\begin{lem}
\label{lem:rational function}Fix $w\in\F_{r}$ and let $G\left(N\right)=\cmsn$
for some fixed $m\in\mathbb{Z}_{\ge2}$ or $G\left(N\right)=\ssn$.
Then there is some rational function $f\in\mathbb{Q}\left(x\right)$,
such that for every large enough $N$, $\trw\left(G\left(N\right)\right)=f\left(N\right)$.
\end{lem}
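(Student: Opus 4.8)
The plan is to realize $\cmsn$ (and likewise $\ssn$) concretely as monomial matrices and read off the trace combinatorially. Write an element as a pair $\left(\sigma;\vec{a}\right)$ with $\sigma\in S_{N}$ and $\vec{a}=\left(a_{1},\ldots,a_{N}\right)\in C_{m}^{\,N}$, acting by $e_{j}\mapsto a_{j}e_{\sigma\left(j\right)}$, so that the trace of this matrix is $\sum_{j:\,\sigma\left(j\right)=j}a_{j}$. Sampling $w$-randomly means taking independent uniform $g_{k}=\left(\sigma_{k};\vec{a}^{\,\left(k\right)}\right)$ and forming $w\left(g_{1},\ldots,g_{r}\right)=\left(\tau;\vec{\eta}\right)$. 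Since the projection to $S_{N}$ is a homomorphism, $\tau=w\left(\sigma_{1},\ldots,\sigma_{r}\right)$, and for each $j$ the scalar $\eta_{j}$ is the product of the fiber coordinates $a^{\left(k\right)}_{\bullet}$ (with exponents $\pm1$ according to the signs of the letters) picked up along the trajectory $j=j_{0},j_{1},\ldots,j_{L}=\tau\left(j\right)$ obtained by reading $w$ letter by letter, where $j_{t}=\sigma_{k_{t}}^{\e_{t}}\left(j_{t-1}\right)$. Thus $\trw\left(G\left(N\right)\right)=\sum_{j}\E\!\left[\eta_{j}\cdot\mathbf{1}\left[\tau\left(j\right)=j\right]\right]$.

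First I would integrate out the fiber coordinates, using independence to condition on the $\sigma_{k}$. Because a uniform element of $C_{m}$ (resp.\ of ${\cal S}^{1}$) has $\E\left[a^{s}\right]$ equal to $1$ when $m\mid s$ (resp.\ when $s=0$) and $0$ otherwise, the conditional expectation $\E_{\vec{a}}\left[\eta_{j}\mid\sigma\right]$ is the $0/1$ indicator of the event that, for every generator $k$ and every value in $\left[N\right]$, the total exponent with which the coordinate $a^{\left(k\right)}_{\bullet}$ appears in $\eta_{j}$ is $\equiv0\pmod m$ (resp.\ $=0$). Crucially this balance condition, together with closure $\tau\left(j\right)=j$, depends only on the combinatorial isomorphism type of the trajectory of $j$ --- the folded, edge-labeled ``position graph'' recording which visited positions coincide and which generator moves between them. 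Grouping starting indices by this type reduces the lemma to showing that, for each type $P$, the expected number $\E_{\sigma}\left[\#\left\{ j:\text{trajectory type}=P\right\} \right]$ is a rational function of $N$; the set of types is finite (each has at most $\left|w\right|+1$ vertices), and the balance indicator is constant on each type.

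For the final step I would use the standard symmetric-group count. If a type $P$ has $q$ distinct position-classes and uses $e_{k}$ distinct $x_{k}$-labeled edges, then the number of injective assignments of the classes into $\left[N\right]$ is the falling factorial $N^{\underline{q}}$, and --- since the $\sigma_{k}$ are independent and uniform --- the probability that they realize the prescribed partial bijections is $\prod_{k}1/N^{\underline{e_{k}}}$ (the foldedness of $P$ is exactly what makes these constraints a consistent partial permutation for each $k$). Hence the number of trajectories whose coincidences \emph{contain} $P$ is the manifestly rational $N^{\underline{q}}\big/\prod_{k}N^{\underline{e_{k}}}$; passing to \emph{exact} types by M\"obius inversion over the (finite) lattice of foldings expresses each exact count, and therefore the balance-weighted total, as a finite $\Q$-linear combination of such ratios. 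This is a single rational function $f\in\Q\left(x\right)$, and once $N$ exceeds the maximal number of vertices over all types it agrees with $\trw\left(G\left(N\right)\right)$, giving the claim; the argument for $\ssn$ is identical with the balance condition read as ``exponent $=0$''.

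I expect the main obstacle to be the bookkeeping around the balance condition rather than the asymptotics. The delicate point is that identifying two positions in the trajectory causes the corresponding exponents of a fiber coordinate to add, so balance is genuinely a property of the \emph{exact} type and is not monotone under coarsening; this is precisely why one cannot attach the balance indicator to the easy ``contains-$P$'' counts and must instead M\"obius-invert to isolate exact types. One must also verify that the per-generator constraints really do assemble into a partial permutation (so that the probability factorizes as a product of falling-factorial reciprocals), which is where the reduced/folded structure of the position graph is used, and confirm that a uniform threshold in $N$ makes all the falling-factorial identities simultaneously valid.
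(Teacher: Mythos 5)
Your first two reductions are sound and match the paper's strategy: pass to trajectories, integrate out the phases to obtain a $0/1$ balance indicator, and observe that this indicator depends only on the exact coincidence type, of which there are finitely many. The genuine error is in the counting step. The quantity $N^{\underline{q}}/\prod_{k}N^{\underline{e_{k}}}$ you derive is the expected number of starting indices whose trajectory has coincidence pattern \emph{exactly} $P$, not the number whose coincidences merely contain those of $P$: your assignments $\phi$ of the $q$ classes into $\left\{ 1,\ldots,N\right\} $ are injective, and once the $\sigma_{k}$ realize the prescribed partial bijections, induction along the letters of $w$ gives $j_{t}=\phi\left(\left[t\right]\right)$ for every position $t$, so $j_{s}=j_{t}$ holds if and only if $\left[s\right]=\left[t\right]$ --- injectivity rules out any additional coincidences; conversely, every trajectory of exact type $P$ arises from exactly one injective $\phi$. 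Consequently the M\"obius inversion you perform next --- and which your closing paragraph insists is essential --- is applied to mislabeled data and turns correct values into incorrect ones. Test it on the paper's own example $w=x^{2}y^{3}x^{2}y^{-1}$, $m=2$: the balanced types are the core graphs of $\left\langle x,y\right\rangle $, $\left\langle x^{2},y\right\rangle $ and $\left\langle x^{2}y,y^{2}\right\rangle $, their exact-count ratios are $\frac{1}{N}$, $\frac{1}{N}$ and $\frac{N-2}{N\left(N-1\right)}$, and the plain, un-inverted sum of these ratios equals the correct value $\trw\left(\ctsn\right)=\frac{3N-4}{N\left(N-1\right)}$. In your scheme these three types form a chain under coarsening (the one-vertex graph is a quotient of the $\left\langle x^{2},y\right\rangle $ graph, which is a quotient of the $\left\langle x^{2}y,y^{2}\right\rangle $ graph), so inversion would replace the three ratios by $\frac{1}{N}$, $0$ and $-\frac{1}{N\left(N-1\right)}$, and your balance-weighted total would be $\frac{N-2}{N\left(N-1\right)}$. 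The function your argument produces is indeed rational, but it does not equal $\trw\left(G\left(N\right)\right)$, so the proof as written fails.

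The repair is immediate, and after it your proof becomes essentially the paper's. Delete the inversion: your falling-factorial ratio already is the exact count, the balance indicator is a function of the exact type, and $\trw\left(G\left(N\right)\right)$ is the finite sum, over folded closed types, of ratio times indicator --- manifestly a rational function of $N$ for all large $N$. This is precisely Lemma \ref{lem:explicit rational expression}: the paper groups the index assignments in \eqref{eq:trw(GN) first step} by their exact coincidence graph, notes that the graphs that occur are exactly the core graphs $\Gamma_{X}\left(H\right)$ with $H\in\XF{\left\langle w\right\rangle }$, and that each contributes $L_{H,\F_{r}}^{X}\left(N\right)$ (your ratio) times the balance factor. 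A last remark: the heuristic you offer for why inversion should be needed is also backwards. Balance \emph{is} preserved under coarsening, since merging vertices merges same-labeled parallel edges and adds their signed multiplicities, so multiples of $m$ remain multiples of $m$ (algebraically, $w\in K_{m}\left(H\right)$ and $H\le J$ imply $w\in K_{m}\left(J\right)$). What fails is only the reverse implication --- an unbalanced type may have balanced coarsenings --- and that is irrelevant once the counts are labeled correctly.
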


For example, if $w=x^{2}y^{3}x^{2}y^{-1}$ and $m=2$, then $\trw\left(\text{\ensuremath{\ctsn}}\right)=\frac{3N-4}{N\left(N-1\right)}$
for all $N\ge2$. The proof of this lemma appears in Section \ref{subsec:Rational-expressions-and-proofs}.
We stress that a statement of this sort is not surprising: a similar
statement is known to hold for various families of characters of the
groups $G\left(N\right)$ when $G\left(N\right)=S_{N}$ \cite{nica1994number,Linial2010},
when $G\left(N\right)=\U\left(N\right)$ \cite{MP-Un}, when $G\left(N\right)=\O\left(N\right)$
or ${\cal S}p\left(N\right)$ \cite{MP-On}, or when $G\left(N\right)=\mathrm{GL}_{N}\left(\mathbb{F}_{q}\right)$
is the general linear group over the finite field $\mathbb{F}_{q}$
\cite{West}.

Our main result with respect to word measures on these generalized
symmetric groups revolves around the leading term of the rational
expression from Lemma \ref{lem:rational function}. The exponent of
the leading term is described by the number in the following definition.
\begin{defn}
\label{def:mth primitivity rank}Let $w\in\F_{r}$ and $m\in\mathbb{Z}_{\ge2}$
or $m=\infty$. Denote\marginpar{$\protect\ecm\left(w\right)$}\emph{
}
\begin{equation}
\ecm\left(w\right)\stackrel{\mathrm{def}}{=}1-\min\left\{ \mathrm{rank}\left(H\right)\,\middle|\,\begin{gathered}H\le\F_{r}\\
w\in K_{m}\left(H\right)
\end{gathered}
\right\} .\label{eq:def of ECM}
\end{equation}
If the set in the right hand side of \eqref{eq:def of ECM} is empty,
we set $\ecm\left(w\right)=-\infty$.
\end{defn}

In words, we look for subgroups $H\le\F_{r}$ of smallest rank such
that $K_{m}\left(H\right)$ contains $w$, and take their Euler characteristic
which equals $1-\mathrm{rank}\left(H\right)$. It is easy to see that
$K_{m}\left(H\right)\le K_{m}\left(\F\right)$ whenever $H\le\F$,
and so if $\ecm\left(w\right)$ is not $-\infty$, it is in fact at
least $1-r$. Thus, in $\F_{r}$, the function $\ecm$ takes values
in $\left\{ 1,0,-1,\ldots,1-r\right\} \cup\left\{ -\infty\right\} $,
where $\ecm\left(w\right)=1\Leftrightarrow w=1$ and $\ecm\left(w\right)=0$
if and only if $w=u^{m}$ for some $1\ne u\in\F_{r}$.

To illustrate, $w=x^{2}y^{3}x^{2}y^{-1}$ is not a proper power, so
$\ecm\left(w\right)<0$. For $m=2$, $w\in K_{2}\left(H\right)$ for
$H=\left\langle x,y\right\rangle $ (as well as for $H=\left\langle x^{2},y\right\rangle $
and for $H=\left\langle x^{2}y,y^{2}\right\rangle $) and so $\ect\left(w\right)=-1$
. For $m\ge3$ or $m=\infty$, $\chi_{m}\left(w\right)=-\infty$.
As another example, consider the orientable surface word $w=\left[x_{1},y_{1}\right]\cdots\left[x_{g},y_{g}\right]$.
Then $w\in K_{m}\left(\left\langle x_{1},y_{1},\ldots,x_{g},y_{g}\right\rangle \right)$
for every $m\in\mathbb{Z}_{\ge2}\cup\left\{ \infty\right\} $, and
one can show that $\ecm\left(w\right)=1-2g$.
\begin{thm}
\label{thm:trace-of-generalized-permutations - baby version}Let $w\in\F_{r}$
and $m\in\mathbb{Z}_{\ge2}$ or $m=\infty$. If $m\in\mathbb{Z}_{\ge2}$,
consider a $w$-random matrix in the group $G\left(N\right)=\cmsn$,
and if $m=\infty$ consider a $w$-random matrix in $G\left(N\right)=S^{1}\wr S_{N}$.
Then 
\begin{equation}
\trw\left(G\left(N\right)\right)=C\cdot N^{\ecm\left(w\right)}+O\left(N^{\ecm\left(w\right)-1}\right),\label{eq:trace-og-generalized-permutations-baby-version}
\end{equation}
where $C$ is a natural number counting the number of subgroups $H\le\F_{r}$
with $\ecm\left(w\right)=1-\mathrm{rank}\left(H\right)$ and $w\in K_{m}\left(H\right)$.
In particular, $\trw\left(G\left(N\right)\right)$ vanishes if $\ecm\left(w\right)=-\infty$.
\end{thm}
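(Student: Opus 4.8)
The plan is to adapt the Stallings core graph method for word measures on $S_{N}$ to the wreath products $\cmsn$ and $\ssn$, handling the extra root-of-unity data by a homological computation. First I would write $\trw\left(G\left(N\right)\right)=\sum_{v=1}^{N}\E\left[\left(M_{w}\right)_{v,v}\right]$, where $M_{w}$ is the $w$-random matrix. A uniform element of $G\left(N\right)$ is a uniform $\sigma\in S_{N}$ together with an independent, uniform decoration of the $N$ points by elements of $C_{m}$ (resp.\ of $S^{1}$); substituting such independent pairs for $x_{1},\ldots,x_{r}$ is the same as choosing a uniformly random $C_{m}$-decorated (resp.\ $S^{1}$-decorated) $N$-sheeted cover of the rose $\wedger$. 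In this language $\left(M_{w}\right)_{v,v}$ is the indicator that the lift of the $w$-loop based at sheet $v$ is closed, times the product of the decorations accumulated along that lift. Thus $\trw\left(G\left(N\right)\right)$ counts closed lifts of the $w$-loop, each weighted by its accumulated phase.

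Next I would organize this count by the \emph{shape} of the trajectory. Reading $w$ letter by letter from sheet $v$ produces a sequence of sheets; identifying sheets and edges that receive equal labels yields a finite graph $\Gamma$ immersed in $\wedger$---automatically folded, since from a fixed sheet each generator has a unique successor---together with an injective labeling of the vertices of $\Gamma$ by elements of $\left\{ 1,\ldots,N\right\} $. Every closed lift determines such a pair $\left(\Gamma,\text{labeling}\right)$ uniquely, and conversely, so $\trw\left(G\left(N\right)\right)$ is an \emph{exact finite} sum over the finitely many immersed quotients $\Gamma$ of the $w$-loop. For a fixed $\Gamma$ with $k$ vertices and $\ell$ edges there are $N\left(N-1\right)\cdots\left(N-k+1\right)\sim N^{k}$ injective labelings, and the probability that the random permutations realize the $\ell$ required edges is $\sim N^{-\ell}$; hence the permutation part contributes $\sim N^{k-\ell}=N^{\chi\left(\Gamma\right)}$, where $\chi\left(\Gamma\right)=\left|V\left(\Gamma\right)\right|-\left|E\left(\Gamma\right)\right|=1-\rk\left(\pi_{1}\left(\Gamma\right)\right)$ is the Euler characteristic.

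The new ingredient is the phase. Because distinct edges of the immersed $\Gamma$ correspond to distinct edges of the cover, their decorations are independent and uniform, and the phase accumulated by the lift equals $\prod_{e\in E\left(\Gamma\right)}\zeta_{e}^{\,n_{e}}$, where $n_{e}\in\mathbb{Z}$ is the net signed number of times the $w$-loop traverses the edge $e$. Averaging gives $\E\left[\prod_{e}\zeta_{e}^{\,n_{e}}\right]=\prod_{e}\E\left[\zeta_{e}^{\,n_{e}}\right]$, which equals $1$ when $m\mid n_{e}$ for every $e$---for $m=\infty$, when $n_{e}=0$ for every $e$---and $0$ otherwise. Since $\left(n_{e}\right)_{e}$ is exactly the $1$-cycle representing $\left[w\right]\in H_{1}\left(\Gamma;\mathbb{Z}\right)$, this survival condition says precisely that $\left[w\right]=0$ in $H_{1}\left(\Gamma;\mathbb{Z}/m\mathbb{Z}\right)$ (resp.\ in $H_{1}\left(\Gamma;\mathbb{Z}\right)$), i.e.\ that $w\in K_{m}\left(\pi_{1}\left(\Gamma\right)\right)$. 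Hence only the immersed quotients with $w\in K_{m}\left(\pi_{1}\left(\Gamma\right)\right)$ contribute, each with leading term $N^{\chi\left(\Gamma\right)}$.

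Finally I would extract the leading term. Among the surviving quotients, those with the largest $\chi\left(\Gamma\right)$ are those of smallest $\rk\left(\pi_{1}\left(\Gamma\right)\right)$ subject to $w\in K_{m}\left(\pi_{1}\left(\Gamma\right)\right)$, so the leading exponent is $1-\min\rk\left(\pi_{1}\left(\Gamma\right)\right)$. It remains to identify this with $\ecm\left(w\right)=1-\min\left\{ \rk\left(H\right)\,\middle|\,H\le\F_{r},\,w\in K_{m}\left(H\right)\right\} $ and to match the counts. Each contributing $\Gamma$ gives a subgroup $H=\pi_{1}\left(\Gamma\right)\le\F_{r}$ with $w\in K_{m}\left(H\right)$, which yields one inequality; for the reverse, given a minimal $H$ I would pass to its Stallings core graph $\Gamma_{H}$ and restrict to the connected subgraph $\Gamma$ traversed by $w$. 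Since $w$ is supported on $\Gamma$ and graphs carry no $1$-boundaries, $\left[w\right]=0$ in $H_{1}\left(\Gamma_{H};\mathbb{Z}/m\mathbb{Z}\right)$ already forces every $n_{e}\equiv0$ inside $\Gamma$, so $w\in K_{m}\left(\pi_{1}\left(\Gamma\right)\right)$ with $\rk\left(\pi_{1}\left(\Gamma\right)\right)\le\rk\left(H\right)$; minimality then forces $\Gamma=\Gamma_{H}$, and this gives a bijection between minimal quotients and minimal subgroups, so the leading coefficient is the number $C$ of the latter. Lemma \ref{lem:rational function} guarantees the whole expression is a rational function of $N$, so this leading-order analysis pins down its top term $C\cdot N^{\ecm\left(w\right)}$; and when no $\Gamma$ survives the sum is empty and $\trw\left(G\left(N\right)\right)\equiv0$, matching $\ecm\left(w\right)=-\infty$. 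I expect the main obstacle to be making the phase-averaging step and its identification with the homological condition $w\in K_{m}\left(\pi_{1}\left(\Gamma\right)\right)$ fully rigorous---in particular controlling the lower-order terms uniformly enough to read off the exact top coefficient---together with the Stallings-core argument matching the two minimization problems and their multiplicities.
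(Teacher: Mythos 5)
Your proposal is correct and follows essentially the same route as the paper: it expands the trace as an exact finite sum over the folded quotient graphs of the $w$-loop (the core graphs $\Gamma_{X}\left(H\right)$ for $H\in\XF{\left\langle w\right\rangle }$), observes that averaging the root-of-unity decorations kills every quotient except those with $w\in K_{m}\left(\pi_{1}\left(\Gamma\right)\right)$, and matches the minimal-rank surviving quotients with the minimal-rank subgroups via the image-of-the-$w$-loop-is-a-proper-free-factor argument, exactly as in the paper's Lemma \ref{lem:minimal rank in Q_m(w)}. The only cosmetic difference is that you justify the equivalence between ``all net traversal numbers divisible by $m$'' and $w\in K_{m}\left(\pi_{1}\left(\Gamma\right)\right)$ homologically (graphs have no $1$-boundaries), whereas the paper argues via spanning trees; both are valid.
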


Namely, the coefficient $C$ in \eqref{eq:trace-og-generalized-permutations-baby-version}
counts the number of the subgroups $H$ demonstrating the value of
$\text{\ensuremath{\ecm\left(w\right)}}$ \foreignlanguage{english}{determined
in \eqref{eq:def of ECM}. }This number is always finite -- see Section
\ref{subsec:Rational-expressions-and-proofs}. In fact, we have a
more detailed result which is required for the proof of Theorem \ref{thm:surface words are separable}
-- see Theorem \ref{thm:two min rank} below. Theorem \ref{thm:trace-of-generalized-permutations - baby version}
is similar in spirit to \cite[Theorem 1.8]{PP15}, where $\trw\left(S_{N}\right)$
is analyzed. The group $S_{N}$ can be regarded as the $m=1$ case
in the current terminology. The analog there of $\ecm\left(w\right)$
is the ``primitivity rank'' of $w$. Moreover, the more detailed
version of Theorem \ref{thm:trace-of-generalized-permutations - baby version},
Theorem \ref{thm:two min rank} below, relies on much of the analysis
from \cite{PP15}. A crucial difference between the current groups
and $S_{N}$ is that the standard defining $N$-dimensional representation
is reducible for $S_{N}$ but irreducible for the groups considered
in Theorem \ref{thm:trace-of-generalized-permutations - baby version}.
We further explain these connections in Sections \ref{sec:Prelimiaries}
and \ref{subsec:Rational-expressions-and-proofs}.

\subsection*{Overview of the proof}

The proof of Theorem \ref{thm:surface words are separable} uses both
the measures on the classical groups $\U\left(N\right)$ and $\O\left(N\right)$,
and the measures on generalized symmetric groups. The roles they play
are somewhat complement. Let us illustrate these complementing roles
by considering the commutator length of a word. Let $w\in\F_{r}$,
and consider the measure it induces on $\U\left(N\right)$. If $\trw\left(\U\left(N\right)\right)=\Theta\left(N^{1-2g}\right)$,
Theorem \ref{thm:Un} yields an upper bound on the commutator length:
$\cl\left(w\right)\le g$.

In contrast, if $w=\left[u_{1},v_{1}\right]\cdots\left[u_{\cl\left(w\right)},v_{\cl\left(w\right)}\right]$
then $w\in K_{\infty}\left(H\right)=\left[H,H\right]$, where\linebreak{}
$H=\left\langle u_{1},v_{1},\ldots,u_{\cl\left(w\right)},v_{\cl\left(w\right)}\right\rangle $
which has rank at most $2\cl\left(w\right)$ and thus $\eci\left(w\right)\ge1-\mathrm{rank}\left(H\right)\ge1-2\cl\left(w\right)$.
Hence if $\trw\left(\ssn\right)=\Theta\left(N^{1-2g}\right)$, we
deduce the lower bound $\cl\left(w\right)\ge g$.

If $w$ induces the same measure as $\left[x_{1},y_{1}\right]\cdots\left[x_{g},y_{g}\right]$
on every compact group, then, in particular,\linebreak{}
$\trw\left(\U\left(N\right)\right)=\trw\left(\ssn\right)=N^{1-2g}$.
The preceding two paragraphs then show that $\cl\left(w\right)=g$.
Moreover, they show the group $H$ from the preceding paragraph has
rank exactly $2g$, and so $u_{1},v_{1},\ldots,u_{\cl\left(w\right)},v_{\cl\left(w\right)}$
are free words, namely, there is no non-trivial relation on them.
Together with Theorem \ref{thm:two min rank} below (a strengthening
of Theorem \ref{thm:trace-of-generalized-permutations - baby version}),
it is possible to deduce that $u_{1},v_{1},\ldots,u_{\cl\left(w\right)},v_{\cl\left(w\right)}$
are, in fact, part of a basis of $\F_{r}$, and therefore $w\auteq\left[x_{1},y_{1}\right]\cdots\left[x_{g},y_{g}\right]$.

The paper is organized as follows. Section \ref{sec:Prelimiaries}
contains some background regarding measures induced by surface words,
as well as background regarding word measures on $S_{N}$ and some
results from \cite{PP15} we use here. It also introduces the notions
of algebraic extensions and of core graphs. In Section \ref{sec:Expected-trace-in-generalized-symmetric-groups}
we analyze word measures on generalized symmetric groups and prove
Lemma \ref{lem:rational function}, Theorem \ref{thm:trace-of-generalized-permutations - baby version},
and the stronger Theorem \ref{thm:two min rank}. We prove Theorem
\ref{thm:surface words are separable} in Section \ref{sec:Surface-Words-proof}
and conclude with some open questions in Section \ref{sec:Open-Questions}.

\subsection*{Notation}

We use the following asymptotic notation. Let $f,g\colon\mathbb{Z}_{\ge1}\to\mathbb{R}$
be two functions defined on the positive integers. We write 
\begin{itemize}
\item $f=O(g)$ if there is a constant $C>0$ such that $\left|f(n)\right|\le C\cdot g(n)$
for every large enough $n$,
\item $f=\Omega(g)$ if there is a constant $C>0$ such that $\left|f(n)\right|\ge C\cdot g(n)$
for every large enough $n$, and
\item $f=\Theta(g)$ if both $f=O(g)$ and $f=\Omega(n)$.
\end{itemize}
\selectlanguage{english}%

\section{Preliminaries\label{sec:Prelimiaries}}
\selectlanguage{american}%

\subsection*{Measures induced by surface words}

\selectlanguage{english}%
We begin this section with some remarks regarding the proof of Theorem
\ref{thm:Frobenius}. \foreignlanguage{american}{We have already mentioned
a reference \cite{frobenius1896gruppencharaktere} for the case where
$G$ is finite and $w=\left[x,y\right]$. In fact, this case is at
the level of an exercise for an arbitrary compact group $G$, as long
as one is aware of the following classical facts: matrix coefficients
of unitary realizations of all irreducible representations of a compact
group form an orthogonal basis for the space of complex functions
on $G$, and the $L^{2}$-norm of a matrix coefficient of a $d$-dimensional
irreducible representation is $\frac{1}{d}$.}

\selectlanguage{american}%
The case of $w=x^{2}$ and $G$ finite was first proved in \cite{frobenius1906reellen}.
For an English proof see \cite[Chapter 4]{isaacs1994character}. Although
the book \cite{isaacs1994character} deals with finite groups, this
proof applies just as well to general compact groups. 

Finally, for $g\ge2$, note that when the letters appearing in $w_{1}$
are distinct from those in $w_{2}$, then the $w_{1}w_{2}$-measure
on $G$ is the convolution of the $w_{1}$-measure and the $w_{2}$-measure,
and using the fact that a $w$-measure is always invariant under conjugation,
we get that $\mathbb{E}_{w_{1}w_{2}}\left(\psi\right)=\frac{1}{\psi\left(e\right)}\mathbb{E}_{w_{1}}\left(\psi\right)\cdot\mathbb{E}_{w_{2}}\left(\psi\right)$
for every irreducible character $\psi$ of $G$. This explains the
complete statement of Theorem \ref{thm:Frobenius}. See also \cite{parzanchevski2014fourier}
and the references therein.

\subsection*{Expected traces in $S_{N}$}

\selectlanguage{english}%
Next, we extract some terminology and results from \cite{PP15} that
are needed here. That paper analyzes $\trw\left(S_{N}\right)$, the
expected trace of a $w$-random permutation in $S_{N}$, where the
permutation is thought of as an $N\times N$ matrix. In other words,
it studies the expected number of fixed points in a $w$-random permutation.
We remark that word measures on $S_{N}$ alone do not suffice to establish
Theorem \ref{thm:surface words are separable}: all irreducible characters
of $S_{N}$ are afforded by real representations, and so the words
$\left[x,y\right]$ and $x^{2}y^{2}$ induce the exact same measure
on $S_{N}$ for all $N$.

Let $\left|w\right|$ denote the length of the reduced form of $w$.
A first observation in the study of $\trw\left(S_{N}\right)$, going
back to Nica \cite{nica1994number}, is that for $N\ge\left|w\right|$,
$\trw\left(S_{N}\right)$ is a rational expression in $N$. Unlike
the other groups mentioned above, this $N$-dimensional representation
of $S_{N}$ is reducible: it is the sum of the trivial representation
and an $\left(N-1\right)$-dimensional irreducible representation.
It is thus not surprising that the rational expression for $\trw\left(S_{N}\right)$
has a contribution of $1$ coming from the trivial representation,
and the interesting part is the deviation from $1$. This deviation
is measured by the ``primitivity rank'' of a word $w\in\F_{r}$,
denoted $\pi\left(w\right)$, which was first introduced in \cite{Puder2014}.
Recall that an element of a free group is called \emph{primitive}
if it belongs to some basis (free generating set). The primitivity
rank of $w\in\F_{r}$ is the following number:
\begin{equation}
\pi\left(w\right)\stackrel{\mathrm{def}}{=}\min\left\{ \mathrm{rank}\left(H\right)\,\middle|\,w\in H\le\F_{r},~w~\mathrm{is~not~primitive~in}~H\right\} .\label{eq:primitivity rank words}
\end{equation}
The functions $\ecm\left(w\right)$ defined above are closely related
to $\pi\left(w\right)$. In fact, one can give a single definition
which applies to all these functions simultaneously. Indeed, for $m\in\mathbb{Z}_{\ge1}\cup\left\{ \infty\right\} $
define 
\[
\ecm'\left(w\right)=1-\min\left\{ \mathrm{rank}\left(H\right)\,\middle|\,H\le\F_{r},w\in K_{m}\left(H\right),~w~\mathrm{is~not~primitive~in}~H\right\} .
\]
Now $\chi_{1}'\left(w\right)=1-\pi\left(w\right)$ as $K_{1}\left(H\right)=H$,
and for $m\ne1$, $\ecm'\left(w\right)=\ecm\left(w\right)$ because
all elements of $K_{m}\left(H\right)$ are automatically non-primitive
in $H$. These different functions of words also share some properties.
For instance, for all $m\in\mathbb{Z}_{\ge1}\cup\left\{ \infty\right\} $,
$\ecm'\left(w\right)$ takes values in $\left\{ 1,0,-1,\ldots,1-r\right\} \cup\left\{ -\infty\right\} $
-- this\footnote{To be precise, $\chi_{\infty}$ is never zero: a cyclic group has
a trivial commutator subgroup.} was explained above for $m\ne1$, and for $m=1$, this is \cite[Corollary 4.2]{Puder2014}.
The role of $\pi\left(w\right)$ in the study of $\trw\left(S_{N}\right)$
is also analogous to the role of $\ecm\left(w\right)$ in Theorem
\ref{thm:trace-of-generalized-permutations - baby version}:
\begin{thm}
\label{thm:pp15, trace of words}\cite[Theorem 1.8]{PP15} Let $w\in\F_{r}$.
Then 
\[
\trw\left(S_{N}\right)=1+C\cdot N^{1-\pi\left(w\right)}+O\left(N^{-\pi\left(w\right)}\right),
\]
where $C\in\mathbb{Z}_{\ge1}$ is the number of subgroups $H\in\F_{r}$
of rank $\pi\left(w\right)$ containing $w$ as a non-primitive element.\\
In particular, $\trw\left(S_{N}\right)\equiv1$ for all $N$ if and
only if $\pi\left(w\right)=\infty$, which holds if and only if $w$
is primitive.
\end{thm}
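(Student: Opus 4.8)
The plan is to compute $\trw(S_N)$, which is the expected number of fixed points of the random permutation $w(\sigma_1,\ldots,\sigma_r)$, since a permutation matrix has trace equal to its number of fixed points. I would model a uniform tuple $(\sigma_1,\ldots,\sigma_r)\in S_N^r$ as a uniformly random action of $\F_r$ on $[N]=\{1,\ldots,N\}$, equivalently a random $N$-sheeted labeled graph (Schreier graph) over the $r$-petal rose, whose fundamental group is $\F_r$. A point $i\in[N]$ is fixed by $w$ precisely when the closed walk spelling $w$ from $i$ returns to $i$, so that $\trw(S_N)=\E\left[\#\{i : \text{the }w\text{-walk from }i\text{ is closed}\}\right]$, and the whole problem becomes one of counting closed $w$-walks in a random labeled graph.

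Next I would expand this count over Stallings core graphs. Recording only the distinct vertices and edges visited, a closed $w$-walk folds to the core graph $\Gamma_H$ of a finitely generated $H\le\F_r$ with $w\in H$ onto which the $w$-walk surjects. Partitioning fixed points by this folding type yields an exact identity
\[
\trw(S_N)=\sum_{H}t_H\,L_H(N),\qquad L_H(N)=\left(N\right)_{v_H}\prod_{j=1}^{r}\frac{\left(N-e_{H,j}\right)!}{N!},
\]
where $(N)_{v}=N(N-1)\cdots(N-v+1)$, $v_H=|V(\Gamma_H)|$, $e_{H,j}$ is the number of $j$-labeled edges of $\Gamma_H$, $t_H\in\mathbb{Z}_{\ge1}$ is a combinatorial multiplicity, and $L_H(N)$ (the expected number of embeddings of $\Gamma_H$ into the random graph) is a rational function with $L_H(N)=N^{1-\rk(H)}\bigl(1+O(1/N)\bigr)$. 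The index set is finite by Takahasi's theorem on algebraic extensions (recalled in Section \ref{sec:Prelimiaries}); this simultaneously reproves that $\trw(S_N)$ agrees with a rational function for $N\ge|w|$, as first observed by Nica \cite{nica1994number}, and licenses termwise asymptotics.

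The delicate point, and the main obstacle, is extracting the leading correction, because the subleading terms of low-rank $L_H$ interfere with the leading terms of higher-rank $L_H$: for $w=xy$ the rank-$1$ and rank-$2$ contributions $\tfrac{N-1}{N}$ and $\tfrac{1}{N}$ cancel to give exactly $1$. I would resolve this with the Möbius-inversion/algebraic-extension machinery of \cite{Puder2014,PP15}, regrouping the sum over the poset of $w$-spanning subgroups so that it collapses to a sum over the algebraic extensions $\langle w\rangle\alg H$ alone, the contributions of the spanning subgroups in which $w$ is primitive telescoping away. The constant $1$ is then the contribution of the rank-$1$ extension $\langle w\rangle$ (for a proper power $w=u^k$ the root's cyclic group also sits at this level, consistent with $\pi(w)=1$). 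The exponent of the next term is governed by the minimal rank of an algebraic extension of rank $\ge2$, which I claim equals $\pi(w)$: if $w$ were primitive in such an $H$ then $\langle w\rangle$ would be a proper free factor of $H$, contradicting algebraicity, so every such $H$ has $w$ non-primitive; conversely a minimal-rank subgroup in which $w$ is non-primitive cannot contain $w$ in a proper free factor $K$ (else minimality forces $w$ primitive in $K$, hence primitive in $H$), so it is itself algebraic. Hence the correction is of order $N^{1-\pi(w)}$.

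Finally, a careful accounting of the Möbius inversion shows that each critical (minimal-rank non-primitive) subgroup contributes leading coefficient exactly $1$, giving $C\in\mathbb{Z}_{\ge1}$ equal to the number of rank-$\pi(w)$ subgroups containing $w$ as a non-primitive element. For the last assertion, $\trw(S_N)\equiv1$ forces the absence of any genuine $N^{1-\pi(w)}$ term, i.e. $\pi(w)=\infty$ (no algebraic extension of rank $\ge2$); and $\pi(w)=\infty$ is equivalent to $w$ being primitive by \cite{Puder2014}, in which case $\langle w\rangle$ is the only algebraic extension and the $w$-measure on $S_N$ is uniform, so indeed $\trw(S_N)\equiv1$.
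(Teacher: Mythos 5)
You have the right architecture, and it is the same one used in \cite{PP15} and mirrored by this paper's analysis of generalized symmetric groups in Section \ref{sec:Expected-trace-in-generalized-symmetric-groups}: reduce $\trw\left(S_{N}\right)$ to an expected fixed-point count, expand over Stallings core graphs as $\trw\left(S_{N}\right)=\sum_{H\in\XF{\left\langle w\right\rangle }}L_{H,\F_{r}}^{X}\left(N\right)$, and regroup by the unique intermediate subgroup $\left\langle w\right\rangle \alg A\ff H$. Two small inaccuracies: your multiplicity $t_{H}$ is in fact always $1$, because a root- and label-preserving morphism of core graphs is unique when it exists (this matters -- otherwise your final coefficient would be a weighted count, not the number of subgroups); and finiteness of the index set is just finiteness of the quotients of a finite graph, not Takahasi's theorem. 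Your poset argument identifying the proper algebraic extensions of $\left\langle w\right\rangle $ of rank $\pi\left(w\right)$ with the rank-$\pi\left(w\right)$ subgroups containing $w$ as a non-primitive element is correct. Bear in mind, however, that the paper itself does not prove this statement at all: Theorem \ref{thm:pp15, trace of words} is quoted verbatim from \cite[Theorem 1.8]{PP15}.

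The genuine gap sits exactly at the point you yourself flag as the main obstacle. After regrouping, the whole theorem reduces to the assertion
\[
\sum_{H\,:\,\left\langle w\right\rangle \ff_{\Xcov}H}L_{H,\F_{r}}^{X}\left(N\right)=1+O\left(N^{-\pi\left(w\right)}\right),
\]
i.e.\ that the lower-order terms of $L_{\left\langle w\right\rangle ,\F_{r}}^{X}$ together with the contributions of all proper free extensions of $\left\langle w\right\rangle $ cancel identically at every order $N^{-1},N^{-2},\ldots,N^{1-\pi\left(w\right)}$. The termwise estimate $L_{H,\F_{r}}^{X}\left(N\right)=N^{1-\rk\left(H\right)}+O\left(N^{-\rk\left(H\right)}\right)$ yields only $1+O\left(N^{-1}\right)$, which drowns the claimed term $C\cdot N^{1-\pi\left(w\right)}$ as soon as $\pi\left(w\right)\ge2$. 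You dispose of this by invoking ``the M\"obius-inversion/algebraic-extension machinery of \cite{Puder2014,PP15}'' -- but the statement being proved \emph{is} \cite[Theorem 1.8]{PP15}, so the appeal is circular, and the cancellation is not a formal consequence of M\"obius inversion over the poset $\covers$: it is the main technical content of \cite{PP15}. Tellingly, the present paper faces the very same issue in its proof of Theorem \ref{thm:two min rank} and resolves it not by an inversion argument but by importing the cancellation from the cited Theorem \ref{thm:PP15 subgroups} (this is how \eqref{eq:second upper bound on contrib} is obtained). So your proposal either collapses to ``cite \cite[Theorem 1.8]{PP15}'' -- legitimate, and exactly what the paper does, but then the rest of the argument is redundant -- or it must actually prove the displayed cancellation, which it does not attempt.
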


\subsection*{Random subgroups in $S_{N}$}

The results in \cite{PP15} apply not only to random elements of $S_{N}$
with measures induced by words $w\in\F_{r}$, but more generally,
to random subgroups of $S_{N}$ with measures induced by subgroups
$H\le\F_{r}$. Given $H$, sample a random subgroup of $S_{N}$ by
choosing a homomorphism $\varphi\in\mathrm{Hom}\left(\F_{r},S_{N}\right)$
uniformly at random and considering $\varphi\left(H\right)\le S_{N}$.
When $H=\left\langle w\right\rangle $, the resulting random subgroup
is the one generated by a $w$-random permutation.

\selectlanguage{american}%
If $H\le J$ are free groups, we say that $J$ is a \emph{free extension}
of $H$, or that $H$ is a \emph{free factor }of $J$, and denote
$H\ff J$\marginpar{$H\protect\ff J$}, if some (and therefore every)
basis of $H$ can be extended to a basis of $J$. Clearly, for $w\ne1$,
$w$ is primitive in $J$ if and only if $\left\langle w\right\rangle \ff J$.
Hence, the following notion of primitivity rank for subgroups generalizes
\eqref{eq:primitivity rank words}. For $H\le\F_{r}$, the primitivity
rank of $H$ is defined to be
\[
\pi\left(H\right)\stackrel{\mathrm{def}}{=}\min\left\{ \mathrm{rank}\left(J\right)\,\middle|\,H\le J\le\F_{r},H~\mathrm{\text{is~not~a~free~factor~of ~J}}\right\} .
\]
We can now state the more general form of Theorem \ref{thm:pp15, trace of words}.
\begin{thm}
\label{thm:PP15 subgroups}\cite[Theorem 1.8]{PP15} Let $H\le\F_{r}$
be a finitely generated subgroup, and let $\varphi\in\mathrm{Hom}\left(\F_{r},S_{N}\right)$
be a uniformly random homomorphism. The expected number of points
in $\left\{ 1,\ldots,N\right\} $ fixed by all elements of the subgroup
$\varphi\left(H\right)$ is
\[
N^{1-\mathrm{rank}\left(H\right)}+C\cdot N^{1-\pi\left(H\right)}+O\left(N^{-\pi\left(H\right)}\right),
\]
where $C$ is the number of subgroups $J\le\F_{r}$ satisfying $\mathrm{rank}\left(J\right)=\pi\left(H\right)$
and containing $H$ but not as a free factor. \\
In particular, this value is $N^{1-\mathrm{rank}\left(H\right)}$
for all $N$ if and only if $\pi\left(H\right)=\infty$, which holds
if and only if $H\ff\F_{r}$.
\end{thm}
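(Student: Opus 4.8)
The plan is to pass from the group $S_{N}$ to a combinatorial model built on Stallings core graphs, and to compute the expected count by summing over quotients of the core graph of $H$.

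First I would fix the core graph $\Gamma_{H}$ of $H$: the finite, connected, pointed, folded, $\{x_{1},\dots,x_{r}\}$-labeled graph whose reduced loops at the basepoint spell exactly the elements of $H$, so that $\rk(H)=|E(\Gamma_{H})|-|V(\Gamma_{H})|+1$. A homomorphism $\varphi\in\Hom(\F_{r},S_{N})$ is the same datum as a choice of permutations $\sigma_{1},\dots,\sigma_{r}$, and these turn $[N]=\{1,\dots,N\}$ into a folded labeled graph $G_{\varphi}$ (the $x_{i}$-edges being $j\mapsto\sigma_{i}(j)$). The first step is the observation that a point $j\in[N]$ is fixed by every element of $\varphi(H)$ if and only if the assignment $\text{basepoint}\mapsto j$ extends, necessarily uniquely since $G_{\varphi}$ is folded, to a label-preserving graph morphism $\Gamma_{H}\to G_{\varphi}$; hence the number of common fixed points of $\varphi(H)$ equals the number of such morphisms.

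Next I would decompose each morphism through its image, $\Gamma_{H}\twoheadrightarrow\Delta\hookrightarrow G_{\varphi}$, where $\Delta$ ranges over the finite set $Q(H)$ of pointed folded quotients of $\Gamma_{H}$. This yields the exact identity
\[
\E_{\varphi}\bigl[\#\{\text{common fixed points of }\varphi(H)\}\bigr]=\sum_{\Delta\in Q(H)}\E_{\varphi}\bigl[\#\{\text{embeddings }\Delta\hookrightarrow G_{\varphi}\}\bigr].
\]
Each summand is computed by elementary falling-factorial counting: one places the vertices of $\Delta$ injectively and imposes, for each letter $x_{i}$, the partial-permutation constraint coming from the $x_{i}$-edges of $\Delta$, obtaining
\[
\E_{\varphi}\bigl[\#\{\text{embeddings }\Delta\hookrightarrow G_{\varphi}\}\bigr]=\frac{(N)_{v(\Delta)}}{\prod_{i=1}^{r}(N)_{e_{i}(\Delta)}},
\]
where $(N)_{k}=N(N-1)\cdots(N-k+1)$, $v(\Delta)=|V(\Delta)|$, and $e_{i}(\Delta)$ is the number of $x_{i}$-edges. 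This is an explicit rational function of $N$ with leading term $N^{\chi(\Delta)}=N^{1-\rk(\Delta)}$, and summing the finitely many terms already shows the whole expectation is rational in $N$.

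The remaining and genuinely hard step is the asymptotic analysis of this finite sum. Each quotient $\Delta$ is the core graph of an overgroup $H\le J_{\Delta}\le\F_{r}$, and one must show that after summation the two leading orders collapse to the stated shape $N^{1-\rk(H)}+C\,N^{1-\pi(H)}+O(N^{-\pi(H)})$. I expect this to be the main obstacle, for two reasons. First, individual summands carry sub-leading terms (for instance $(N)_{2}/((N)_{1}(N)_{1})=1-N^{-1}$), and these must be shown to cancel across different quotients so that no order strictly between $1-\rk(H)$ and $1-\pi(H)$ survives; the natural tools are a M\"obius inversion over the poset $Q(H)$, together with the algebraic--free-factor factorization of extensions, which singles out the quotients that are genuine algebraic extensions $H\alg J_{\Delta}$. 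Second, one must identify the surviving coefficient: using that the minimum defining $\pi(H)$ is attained at a proper algebraic extension, and that a finitely generated subgroup has only finitely many algebraic extensions (Takahasi's theorem), one shows the rank-$\pi(H)$ algebraic extensions (the ``critical'' overgroups) are exactly the subgroups $J$ of rank $\pi(H)$ containing $H$ as a non-free-factor, so the coefficient of $N^{1-\pi(H)}$ is precisely their number $C$. The ``in particular'' clause then follows: $\pi(H)=\infty$ holds exactly when $H$ has no proper algebraic extension, i.e. when $H\ff\F_{r}$, in which case all corrections cancel and the sum collapses to $N^{1-\rk(H)}$.
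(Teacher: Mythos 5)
Your combinatorial setup is correct, and it is exactly the route of \cite{PP15} --- a theorem which, note, this paper does not prove but imports wholesale, recapitulating the same machinery in Section \ref{subsec:Rational-expressions-and-proofs}: common fixed points of $\varphi\left(H\right)$ biject with labeled-graph morphisms $\Gamma_{X}\left(H\right)\to G_{\varphi}$, splitting each morphism through its image expresses the expectation as a sum over the pointed folded quotients of $\Gamma_{X}\left(H\right)$, these quotients are precisely the core graphs of the subgroups $J\in\XF H$, and the expected number of embeddings of $\Gamma_{X}\left(J\right)$ into $G_{\varphi}$ is your falling-factorial ratio, i.e.\ $L_{J,\F_{r}}^{X}\left(N\right)$. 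This is the paper's formula \eqref{eq:formula for fixed points of subgroup}, and it does establish rationality. Your identification of the coefficient $C$ is also sound: the subgroups $J$ of rank $\pi\left(H\right)$ containing $H$ but not as a free factor are exactly the proper algebraic extensions of $H$ of minimal rank, by the decomposition $H\alg A\ff J$ and the fact that a proper free factor has strictly smaller rank.

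The genuine gap is that the step you yourself flag as ``the main obstacle'' \emph{is} the theorem, and you supply no argument for it. Each individual summand $L_{J,\F_{r}}^{X}\left(N\right)$ has nonzero coefficients at every order below its leading term (your own example $1-N^{-1}$), so the assertion that after summing, no order strictly between $N^{1-\rk\left(H\right)}$ and $N^{1-\pi\left(H\right)}$ survives, and that the coefficient at $N^{1-\pi\left(H\right)}$ equals exactly the number of critical subgroups, is a global cancellation phenomenon across the whole poset $\XF H$. Naming ``M\"obius inversion over the poset plus the algebraic--free factorization'' is pointing at the right toolbox --- this is indeed what \cite{PP15} uses --- but there it occupies several sections: one must define the inversions over the $\covers$ order, prove growth bounds on the inverted functions, and prove that critical subgroups are algebraic extensions; none of this is formal. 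Worse, the natural intermediate statement your sketch implicitly needs --- that for an algebraic extension $A$ the free cone satisfies $\sum_{J\,:\,A\ff_{\Xcov}J}L_{J,\F_{r}}^{X}\left(N\right)=N^{1-\rk\left(A\right)}+O\left(N^{-\pi\left(A\right)}\right)$ --- is precisely what this paper records as \eqref{eq:second upper bound on contrib}, and it is derived there \emph{from} the cited theorem; assuming it in your argument would be circular. So your proposal proves the exact rational formula and frames the asymptotic problem correctly, but the asymptotic shape itself --- the actual content of \cite[Theorem 1.8]{PP15} --- remains unproven.
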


\subsection*{Algebraic extensions}

\selectlanguage{english}%
We now describe the notion of \emph{algebraic extensions }in free
groups which is used in Section \ref{subsec:The-second-term} below.
Let $\F$ be a free group and $H,J\le\F$ two subgroups. We call $J$
an algebraic extension of $H$, denoted $H\alg J$\marginpar{$H\protect\alg J$},
if and only if $H\le J$ and there is no intermediate proper free
factor of $J$, namely, if whenever $H\le M\ff J$, we must have $M=J$.
To give a sense of this notion, we mention some of its properties:
algebraic extensions form a partial order on the set of subgroups
of $\F$; for every extension of free groups $H\le J$ there is a
unique intermediate subgroup $A$ satisfying $H\alg A\ff J$; and
every finitely generated subgroup $H$ of $\F$ has finitely many
algebraic extensions. See the survey \cite{miasnikov2007algebraic}
or Section 4 of \cite{PP15} for more details. 

In the language of algebraic extensions, $\pi\left(H\right)$ is the
smallest rank of a proper algebraic extension of $H$, and $\pi\left(w\right)$
is the smallest rank of a proper algebraic extension of $\left\langle w\right\rangle $.

\subsection*{Core graphs}

Recall that we have a fixed basis $x_{1},\ldots,x_{r}$ for $\F_{r}$.
Call it $X$. Associated with every (finitely generated) subgroup
$H$ of $\F_{r}$ is a rooted, directed and edge-labeled (finite)
graph, where the edges are labeled by $x_{1},\ldots,x_{r}$. This
graph, denoted $\Gamma_{X}\left(H\right)$\marginpar{$\Gamma_{X}\left(H\right)$},
is called the \emph{(Stallings) core graph }of $H$ and was introduced
in \cite{stallings1983topology}. It can be obtained from the Schreier
graph depicting the right action of $\F_{r}$ on $H\backslash\F_{r}$,
the right cosets of $H$ in $\F_{r}$, by trimming all ``hanging
trees''. For more details we refer the reader to \cite[Section 3]{PP15}.
We illustrate the concept in Figure \ref{fig:first_core_graph}.

\FigBesBeg 
\begin{figure}[h]
\centering{}%
\begin{minipage}[t]{0.5\columnwidth}%
\[
\xymatrix{\otimes\ar[rr]^{x_{1}} &  & \bullet\\
\\
\bullet\ar[rr]^{x_{1}}\ar[uu]_{x_{2}} &  & \bullet\ar[uull]_{x_{1}}\ar[uu]_{x_{2}}
}
\]
\end{minipage}\caption{\label{fig:first_core_graph} The core graph $\Gamma_{X}\left(H\right)$
where $H=\left\langle x_{1}x_{2}^{-1}x_{1},x_{1}^{-2}x_{2}\right\rangle \protect\leq\protect\F_{2}$.}
\end{figure}
\FigBesEnd 

Let us mention here a few basic facts about core graphs and some further
notations that we will need below. The elements of $H$ correspond
exactly to the non-backtracking closed paths at the root of $\Gamma_{X}\left(H\right)$.
The labels and directions of the edges give rise to a graph-morphism
to the bouquet of $r$ directed loops, labeled by $x_{1},\ldots,x_{r}$,
and this morphism is always an immersion. In other words, every vertex
of $\Gamma_{X}\left(H\right)$ has at most one outgoing edge with
a given label, and at most one incoming edge with a given label.

A morphism of rooted, directed and edge labeled graphs from $\Gamma_{X}\left(H\right)$
to $\Gamma_{X}\left(J\right)$ exists if and only if $H\le J$. When
this morphism is surjective, we say that $H$ ``\emph{$X$-covers}''
$J$, and denote $H\covers J$\marginpar{\selectlanguage{american}%
$\protect\covers$\selectlanguage{english}%
}. This relation constitutes a partial order on the set of finitely
generated subgroups of $\F_{r}$, a partial order which depends on
the choice of basis $X$. The easiest way to explain why there is
a rational expression for $\trw\left(S_{N}\right)$ and, moreover,
to compute this formula explicitly, is by considering the finite set\marginpar{$\protect\XF H$}
\[
\XF H\stackrel{\mathrm{def}}{=}\left\{ H\le\F_{r}\,\middle|\,\left\langle w\right\rangle \covers H\right\} 
\]
of subgroups which are $X$-covered by the subgroup $\left\langle w\right\rangle $
(see \cite[Section 5]{Puder2014}). We shall use these graphs below
to prove Lemma \ref{lem:rational function} and Theorem \ref{thm:trace-of-generalized-permutations - baby version}.

\section{\label{sec:Expected-trace-in-generalized-symmetric-groups}Expected
trace in generalized symmetric groups}

\subsection{\label{subsec:Rational-expressions-and-proofs}Rational expressions
and their leading term}

\selectlanguage{american}%
Fix $w\in\F_{r}$ and let $G\left(N\right)$ be one of the groups
$\cmsn$ ($m\in\mathbb{Z}_{\ge2}$), $\ssn$, or merely $S_{N}$,
realized as $N\times N$ complex matrices. If $w=x_{i_{1}}^{\varepsilon_{1}}x_{i_{2}}^{\varepsilon_{2}}\cdots x_{i_{\ell}}^{\varepsilon_{\ell}}\in\F_{r}$
(here $i_{1},\ldots,i_{\ell}\in\left\{ 1,\ldots,r\right\} $ and $\varepsilon_{1},\ldots,\varepsilon_{\ell}\in\left\{ \pm1\right\} $),
we analyze the following expression:
\begin{equation}
\trw\left(G\left(N\right)\right)=\int_{A_{1},\ldots,A_{r}\in G\left(N\right)}\mathrm{tr}\left(w\left(A_{1},\ldots,A_{r}\right)\right)=\sum_{j_{1},\ldots,j_{\ell}=1}^{N}\int_{A_{1},\ldots,A_{r}\in G\left(N\right)}\left[A_{i_{1}}^{\varepsilon_{1}}\right]_{j_{1},j_{2}}\left[A_{i_{2}}^{\varepsilon_{2}}\right]_{j_{2},j_{3}}\cdots\left[A_{i_{\ell}}^{\varepsilon_{\ell}}\right]_{j_{\ell},j_{1}},\label{eq:trw(GN) first step}
\end{equation}
where $A_{1},\ldots,A_{r}$ are independent Haar-uniform elements
of $G\left(N\right)$. In all cases considered but $\ssn$ this is
the uniform measure on $G\left(N\right)$. The Haar measure on $\ssn$
is given by a uniform distribution on $S_{N}$ to determine the non-zero
entries and independent Lebesgue measure on the unit circle for every
non-zero entry of the matrix.

Consider an assignment of values in $\left\{ 1,\ldots,N\right\} $
to the indices $j_{1},\ldots,j_{\ell}$. Every assignment induces
a partition on $\left\{ 1,\ldots,\ell\right\} $, where two indices
$s$ and $t$ belong to the same block if and only if $j_{s}=j_{t}$.
Such a partition can be described by a rooted, directed, edge-labeled
graph as follows: the vertices correspond to the blocks in the partition
on $\left\{ 1,\ldots,\ell\right\} $, the root is the block containing
$1$, and for every $t=1,\ldots,\ell$ there is a directed edge labeled
$x_{i_{t}}$ connecting the block of $j_{t}$ with the block of $j_{\left(t+1\right)\mod\ell}$,
and directed towards the $j_{\left(t+1\right)\mod\ell}$ block if
$\varepsilon_{t}=1$ or towards the $j_{t}$ block in case $\varepsilon_{t}=-1$.
There is at most one $x_{i}$-edge directed from a vertex $u$ to
a vertex $v$. For example, if $w=x_{1}^{2}x_{2}^{2}$ and the assignment
is $\left(j_{1},j_{2},j_{3},j_{4}\right)=\left(1,1,3,5\right)$, the
graph is the following:

\selectlanguage{english}%
\begin{minipage}[t]{0.5\columnwidth}%
\[
\xymatrix{\otimes\ar[rr]^{x_{1}}\ar@(ul,ur)^{x_{1}} &  & \bullet\ar[dd]^{x_{2}}\\
\\
 &  & \bullet\ar[uull]^{x_{2}}
}
\]
\end{minipage}

However, this assignment contributes zero to the summation in \eqref{eq:trw(GN) first step}:
in all groups considered here, there is exactly one non-zero entry
in every column and every line, yet the assignment $\left(1,1,3,5\right)$
leads to the integral over the monomial $\left[A_{1}\right]_{1,1}\left[A_{1}\right]_{1,3}\left[A_{2}\right]_{3,5}\left[A_{2}\right]_{5,1}$,
which involves two entries from the same row of $A_{1}$ and is thus
identically zero. This happens exactly when the graph associated with
the assignment has a vertex with two out-going edges with the same
label, or a vertex with two incoming edges with the same label. This
shows that we can restrict our attention to assignments associated
with graphs which are core graphs. Moreover, these graphs are precisely
the graphs which are $X$-covered by $\left\langle w\right\rangle $,
namely the graphs $\Gamma_{X}\left(H\right)$ for $H\in\XF{\left\langle w\right\rangle }$.

We can now group together all assignments leading to the same core
graph $\Gamma_{X}\left(H\right)$, and see that the contribution of
all these assignments is given by a rational function in $N$ (which
depends on the family of groups we consider). Because the set $\XF H$
is finite, this leads to a rational expression for $\trw\left(G\left(N\right)\right)$
for families of generalized symmetric groups. The number of assignments
associated with a given $\Gamma_{X}\left(H\right)$ is $N\left(N-1\right)\cdots\left(N-\#V\left(H\right)+1\right)$,
where $\#V\left(H\right)$ denotes the number of vertices in $\Gamma_{X}\left(H\right)$.
The probability that the uniformly random $A_{i}\in G\left(N\right)$
has non-zero entries which correspond to a given assignment associated
with $\Gamma$ is precisely $\frac{1}{N\left(N-1\right)\cdots\left(N-\#E_{i}\left(H\right)+1\right)}$,
where $\#E_{i}\left(H\right)$ is the number of $x_{i}$-edges in
$\Gamma_{X}\left(H\right)$. Overall, for $H\in\XF{\left\langle w\right\rangle }$,
if $N\ge\#E_{i}\left(H\right)$ for all $i$, the contribution of
$H$ to \eqref{eq:trw(GN) first step} is\footnote{We use the notation $L_{H,\F_{r}}^{X}\left(N\right)$ which is used
for this expression in \cite{PP15}.}\marginpar{\selectlanguage{american}%
$L_{H,\protect\F_{r}}^{X}\left(N\right)$\selectlanguage{english}%
} 
\[
L_{H,\F_{r}}^{X}\left(N\right)\stackrel{\mathrm{def}}{=}\frac{N\left(N-1\right)\cdots\left(N-\#V\left(H\right)+1\right)}{\prod_{i=1}^{r}N\left(N-1\right)\cdots\left(N-\#E_{i}\left(H\right)+1\right)}
\]
times the expected value of the product of non-zero entries of $A_{1},\ldots,A_{r}$
involved in the monomial in \eqref{eq:trw(GN) first step}. In the
case of $S_{N}$, these non-zero entries are identically $1$, and
so, as depicted in \cite[Section 5]{Puder2014}, for all $N$ large
enough,
\begin{equation}
\trw\left(S_{N}\right)=\sum_{H\in\XF{\left\langle w\right\rangle }}L_{H,\F_{r}}^{X}\left(N\right).\label{eq:formula for fixed points of a word in S_N}
\end{equation}
For example, in the case of $w=x_{1}^{2}x_{2}^{2}$, there are precisely
$7$ subgroups in $\XF{\left\langle w\right\rangle }$, and the total
contribution is $1+\frac{1}{N-1}$, holding for $N\ge2$. The detailed
computation for $w=\left[x_{1},x_{2}\right]$ is depicted in \cite[Page 53]{Puder2014}.

\selectlanguage{american}%
For the other groups considered here, the non-zero entries are not
identically one and actually have zero expectation. So often, the
contribution of an assignment to \eqref{eq:trw(GN) first step} vanishes
even when the assignment does correspond to some core graph. For example,
in the case of the group $C_{3}\wr S_{N}$, an assignment gives a
non-zero contribution if and only if it corresponds to a core graph,
\emph{and }every entry is repeated in the monomial some multiple of
$3$ times, when we count with signs. E.g., if the entry $A_{3,4}$
appears in the monomial in \eqref{eq:trw(GN) first step}, it must
appear a total number of 0 times as in $\cdots A_{3,4}\cdots A_{3,4}^{~-1}\cdots$,
a total number of $-3$ times as in $\cdots A_{3,4}^{-1}\cdots A_{3,4}^{-1}\cdots A_{3,4}^{-1}\cdots A_{3,4}\cdots A_{3,4}^{-1}$,
and so on. For $A\in C_{3}\wr S_{N}$ uniformly random and every $q\in\mathbb{Z}$,
conditioning on that $A_{3,4}$ is non-zero, the expected value of
$\left[A_{3,4}\right]^{q}$ is $\mathbf{1}_{q\equiv0\pmod3}$. Fortunately,
this property is a feature of the core graphs and not only of the
particular assignment: by definition, $H\in\XF{\left\langle w\right\rangle }$
if and only if $w\in H$ and every edge of $\Gamma_{X}\left(H\right)$
is covered by some edge of $\Gamma_{X}\left(\left\langle w\right\rangle \right)$
in the graph morphism $\Gamma_{X}\left(\left\langle w\right\rangle \right)\to\Gamma_{x}\left(H\right)$.
In other words, the closed path at the root of $\Gamma_{X}\left(H\right)$
which corresponds to $w$ must go through every edge of the graph.
The restriction that every non-zero entry repeats some multiple of
$3$ times in the monomial (counted with signs), is equivalent to
that the path of $w$ goes through every edge a total signed number
of times which is a multiple of $3$. This generalizes to the following
explicit form of Lemma \ref{lem:rational function}:
\begin{lem}
\label{lem:explicit rational expression}Let $w\in\F_{r}$. For every
$m\in\mathbb{Z}_{\ge2}$ denote
\[
{\cal Q}_{m}\left(w\right)=\left\{ H\in\XF{\left\langle w\right\rangle }\,\middle|\,\begin{gathered}\mathrm{the~number~of~times~}w~\mathrm{traverses~every~edge~of~\Gamma_{X}\left(H\right)},\\
\mathrm{counted~with~signs,~is~a~multiple~of~}m
\end{gathered}
\right\} .
\]
Then for every large enough $N$,
\begin{equation}
\trw\left(\cmsn\right)=\sum_{H\in{\cal Q}_{m}\left(w\right)}L_{H,\F_{r}}^{X}\left(N\right).\label{eq:rational formula for cmsn}
\end{equation}
Likewise, denote
\[
{\cal Q}_{\infty}\left(w\right)=\left\{ H\in\XF{\left\langle w\right\rangle }\,\middle|\,\begin{gathered}w~\mathrm{traverses~every~edge~of~\Gamma_{X}\left(H\right)}\\
\mathrm{the~same~number~of~times~in~each~direction}
\end{gathered}
\right\} .
\]
Then for every large enough $N$,
\begin{equation}
\trw\left(\ssn\right)=\sum_{H\in{\cal Q}_{\infty}\left(w\right)}L_{H,\F_{r}}^{X}\left(N\right).\label{eq:rational formula for s1sn}
\end{equation}
\end{lem}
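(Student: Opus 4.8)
The plan is to resume the analysis already begun in the surrounding text, where the expression \eqref{eq:trw(GN) first step} was decomposed by grouping index-assignments according to the core graph $\Gamma_{X}\left(H\right)$ they induce, and where it was observed that only assignments corresponding to $H\in\XF{\left\langle w\right\rangle }$ can contribute. What remains is to evaluate, for a fixed such $H$, the expected value of the monomial in \eqref{eq:trw(GN) first step} over the relevant group, and to show that the combinatorial vanishing condition picks out exactly $\mathcal{Q}_{m}\left(w\right)$ (resp. $\mathcal{Q}_{\infty}\left(w\right)$). First I would fix $H\in\XF{\left\langle w\right\rangle }$ and a single assignment realizing $\Gamma_{X}\left(H\right)$, and regroup the factors $\left[A_{i_{t}}^{\varepsilon_{t}}\right]_{j_{t},j_{t+1}}$ of the monomial according to which directed edge of $\Gamma_{X}\left(H\right)$ each one traverses. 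Since the graph is a core graph, each directed edge $e$ corresponds to a single off-diagonal position $\left(p,q\right)$ of some fixed generator matrix $A_{i\left(e\right)}$, and every factor routed along $e$ contributes the \emph{same} matrix entry $\left[A_{i\left(e\right)}\right]_{p,q}$ (or its inverse, i.e. conjugate-and-transpose entry, according to the sign $\varepsilon_{t}$). Collecting these, the monomial factors as a product over the edges of $\Gamma_{X}\left(H\right)$ of powers of independent random entries.

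The key computation is then the expectation of a single such edge-factor. For the group $\cmsn$, conditioned on the underlying permutation placing a non-zero entry at position $\left(p,q\right)$, that entry is a uniform $m$-th root of unity $\zeta$, and the factor contributed by edge $e$ is $\zeta^{s\left(e\right)}$, where $s\left(e\right)$ is the total signed number of times the path of $w$ traverses $e$. Since distinct edges involve \emph{independent} entries (they occupy distinct matrix positions, because $\Gamma_{X}\left(H\right)$ is a core graph and the morphism to the bouquet is an immersion), the edge-factors are independent conditioned on the permutation, so the expectation of the whole monomial is $\prod_{e}\mathbb{E}\bigl[\zeta^{s\left(e\right)}\bigr]$. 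Because $\mathbb{E}\bigl[\zeta^{s}\bigr]=\mathbf{1}_{m\mid s}$ for a uniform $m$-th root of unity, this product is $1$ precisely when $m\mid s\left(e\right)$ for every edge $e$, and $0$ otherwise. This is exactly the condition defining $\mathcal{Q}_{m}\left(w\right)$, so the surviving terms are exactly those $H\in\mathcal{Q}_{m}\left(w\right)$, each contributing $L_{H,\F_{r}}^{X}\left(N\right)$ as in the $S_{N}$ computation recalled in \eqref{eq:formula for fixed points of a word in S_N}, giving \eqref{eq:rational formula for cmsn}. For $\ssn$ the argument is identical except that the non-zero entry is a uniform element $z$ of the unit circle, $\mathbb{E}\bigl[z^{s}\bigr]=\mathbf{1}_{s=0}$, and so the surviving $H$ are exactly those for which $s\left(e\right)=0$ for every edge, i.e. $w$ traverses each edge equally often in both directions; this is the definition of $\mathcal{Q}_{\infty}\left(w\right)$ and yields \eqref{eq:rational formula for s1sn}.

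The main obstacle, and the point requiring the most care, is the independence claim: that grouping by edges genuinely separates the monomial into a product of expectations of independent random variables, and that the value $\mathbb{E}\bigl[\zeta^{s\left(e\right)}\bigr]$ (resp. $\mathbb{E}\bigl[z^{s\left(e\right)}\bigr]$) is the \emph{same} for every assignment realizing the fixed graph $\Gamma_{X}\left(H\right)$, so that it is legitimately a function of $H$ alone and can be pulled out of the sum over assignments. For $\cmsn$ one must also confirm that the root-of-unity entries are independent of the underlying permutation and of one another across distinct positions, which follows from the wreath-product description of the Haar (uniform) measure; for $\ssn$ the analogous independence of the circle-valued phases from the $S_{N}$ part is exactly the description of the Haar measure given just before \eqref{eq:trw(GN) first step}. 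Once this independence and graph-invariance are nailed down, the remaining count of assignments realizing $\Gamma_{X}\left(H\right)$ and the resulting rational factor $L_{H,\F_{r}}^{X}\left(N\right)$ are identical to the $S_{N}$ case already established, and the lemma follows.
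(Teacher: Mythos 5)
Your proposal is correct and follows essentially the same route as the paper: the paper's own proof of this lemma is exactly the discussion preceding its statement --- grouping index assignments by the core graph $\Gamma_{X}\left(H\right)$ they induce, extracting the counting factor $L_{H,\F_{r}}^{X}\left(N\right)$, and observing that the expected product of the independent uniform phases (roots of unity for $\cmsn$, circle elements for $\ssn$) equals $1$ when the signed traversal count of every edge is a multiple of $m$ (resp.\ zero) and vanishes otherwise, which is precisely the defining condition of ${\cal Q}_{m}\left(w\right)$ (resp.\ ${\cal Q}_{\infty}\left(w\right)$). One immaterial slip: an edge of $\Gamma_{X}\left(H\right)$ may be a loop and then corresponds to a \emph{diagonal} matrix position, so the word ``off-diagonal'' should be dropped; nothing in your argument uses it.
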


Although this is not explicit from the notation, note that because
the set $\XF{\left\langle w\right\rangle }$ depends on the choice
of basis $X$, so do the sets ${\cal Q}_{m}\left(w\right)$ and ${\cal Q}_{\infty}\left(w\right)$.

As an example, if $w=x_{1}^{2}x_{2}^{2}$, six of the seven subgroups
in $\XF{\left\langle w\right\rangle }$ belong to none of ${\cal Q}_{m}\left(w\right)$
($m\in\mathbb{Z}_{\ge2}\cup\left\{ \infty\right\} $). The remaining
subgroup is $\F_{2}$ itself, with core graph \foreignlanguage{english}{}\selectlanguage{english}%
$\vphantom{\Big|}\Gamma=\xymatrix@1{\otimes\ar@(dl,ul)[]^{x_{1}}\ar@(dr,ur)[]_{x_{2}}}$\selectlanguage{american}%
\foreignlanguage{english}{, which belongs to ${\cal Q}_{2}\left(w\right)$
but does not belong to ${\cal Q}_{m}\left(w\right)$ for $m\in\mathbb{Z}_{\ge3}\cup\left\{ \infty\right\} $.
Thus, $\tr_{x_{1}^{2}x_{2}^{2}}\left(\ctsn\right)=\frac{1}{N}$ for
every $N\ge1$, whereas $\tr_{x_{1}^{2}x_{2}^{2}}\left(\cmsn\right)=\tr_{x_{1}^{2}x_{2}^{2}}\left(\ssn\right)=0$
for every $m\in\mathbb{Z}_{\ge3}$ and $N\ge1$. (Note how this agrees
with Theorem \ref{thm:Frobenius}.)}

To say how large $N$ should be, one needs to go over the elements
of ${\cal Q}_{m}\left(w\right)$. However, as every edge in $\Gamma_{X}\left(H\right)$
is covered at least twice (in the same direction or in different directions),
the formulas in Lemma \ref{lem:explicit rational expression} holds
at least for $N\ge\frac{1}{2}\max_{i\in\left[r\right]}\#E_{i}\left(w\right)$.\medskip{}

Let $H\le\F_{r}$ be a subgroup containing $w$. Our next observation
is that the conditions above regarding how many times $w$ traverses
every edge of $\Gamma_{X}\left(H\right)$ are, in fact, algebraic:
\begin{lem}
Let $w\in\F_{r}$ and $H\in\F_{r}$ be a subgroup containing $w$,
and let $m\in\mathbb{Z}_{\ge2}\cup\left\{ \infty\right\} $. The number
of times, counted with signs, that $w$ traverses every edge in $\Gamma_{X}\left(H\right)$
is a multiple of $m$ (or $0$ if $m=\infty$), if and only if $w\in K_{m}\left(H\right)$.
\end{lem}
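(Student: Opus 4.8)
The plan is to reinterpret the signed edge-traversal counts of $w$ homologically. Recall that $\Gamma_{X}(H)$ is a connected graph whose fundamental group, based at the root, is canonically identified with $H$, with $w\in H$ corresponding to the reduced closed path $p_{w}$ it traces. For each edge $e$ of $\Gamma_{X}(H)$, let $\phi_{e}(w)\in\mathbb{Z}$ denote the number of times $p_{w}$ crosses $e$ in its positive direction minus the number of times in its negative direction. Since this count is additive under concatenation of paths, each $\phi_{e}$ is a homomorphism $H\to\mathbb{Z}$, and together they assemble into a homomorphism $\Phi=(\phi_{e})_{e}\colon H\to C_{1}(\Gamma_{X}(H);\mathbb{Z})$, the group of integral $1$-chains. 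Because $p_{w}$ is a closed path, $\Phi(w)$ has zero boundary, so $\Phi$ lands in the cycle group $Z_{1}(\Gamma_{X}(H);\mathbb{Z})$, which for a graph coincides with $H_{1}(\Gamma_{X}(H);\mathbb{Z})$ (there are no $2$-cells, hence no boundaries).

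The key structural fact I would establish is that $\Phi$ is precisely the abelianization map: it factors through $H^{\mathrm{ab}}$ and induces the Hurewicz isomorphism $H^{\mathrm{ab}}\cong H_{1}(\Gamma_{X}(H);\mathbb{Z})=Z_{1}$. Concretely, fix a spanning tree $T$ of $\Gamma_{X}(H)$; then the non-tree edges index a free basis $\{b_{e}\}$ of $H$, where $b_{e}$ is the loop running through $T$ to the origin of $e$, across $e$, and back through $T$ to the root. Since $b_{e}$ crosses exactly one non-tree edge, namely $e$ and once positively, we get $\phi_{e'}(b_{e})=\delta_{e,e'}$ for non-tree edges $e,e'$. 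Hence the coordinates of $\bar{w}\in H^{\mathrm{ab}}$ in the basis $\{\bar{b}_{e}\}$ are exactly the signed counts $\phi_{e}(w)$ on the non-tree edges, and composing $\Phi$ with the projection onto the non-tree coordinates recovers the identification $H^{\mathrm{ab}}\cong\mathbb{Z}^{\mathrm{rank}(H)}$.

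With this in hand, the lemma becomes a translation. For finite $m$, the defining surjection $H\twoheadrightarrow C_{m}^{\mathrm{rank}(H)}$ factors as $H\to H^{\mathrm{ab}}=\mathbb{Z}^{\mathrm{rank}(H)}\to(\mathbb{Z}/m)^{\mathrm{rank}(H)}$ (reduction mod $m$), so $w\in K_{m}(H)$ iff $\bar{w}\in m\cdot H^{\mathrm{ab}}$, iff every non-tree coordinate $\phi_{e}(w)$ is divisible by $m$. It remains to upgrade ``every non-tree edge'' to ``every edge''. This is where I would again use that $\Phi(w)$ is a cycle: writing $\Phi(w)=\sum_{e}\phi_{e}(w)\,\gamma_{e}$ in terms of the fundamental cycles $\gamma_{e}$ (each $\gamma_{e}=e$ plus a tree path), if every non-tree coefficient $\phi_{e}(w)$ is a multiple of $m$, then $\Phi(w)\in mZ_{1}\subseteq mC_{1}$, so the signed count on every edge, tree edges included, is divisible by $m$; the converse is immediate. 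Equivalently, $mC_{1}\cap Z_{1}=mZ_{1}$ since $C_{0}$ is torsion-free. For $m=\infty$, the same factorization gives $w\in K_{\infty}(H)=[H,H]$ iff $\bar{w}=0$ iff $\Phi(w)=0$, i.e.\ $w$ crosses every edge the same number of times in each direction.

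The only genuinely delicate point is the identification of $\Phi$ with the abelianization/Hurewicz map in the second paragraph; once that is pinned down via the spanning-tree basis, the divisibility bookkeeping and the passage from non-tree to all edges are routine linear algebra over $\mathbb{Z}$. I would also note that although the tree $T$ and the basis $\{b_{e}\}$ depend on choices, the resulting condition $w\in K_{m}(H)$ does not, consistently with the basis-independence of $K_{m}(H)$ already recorded above.
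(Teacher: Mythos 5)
Your proposal is correct, and it shares the paper's foundation but finishes the harder direction differently. Both arguments hinge on the Nielsen--Schreier basis $\{b_{e}\}$ of $H$ attached to a spanning tree $T$ of $\Gamma_{X}(H)$, together with the identity $\phi_{e'}(b_{e})=\delta_{e,e'}$, which identifies the exponents of $w$ in this basis with its signed traversal counts on non-tree edges; in both proofs this settles the equivalence ``$w\in K_{m}(H)$ iff the non-tree counts are all divisible by $m$.'' The divergence is in upgrading divisibility from non-tree edges to \emph{all} edges. The paper argues edge by edge: if $e$ is not a bridge, it re-chooses the spanning tree so as to avoid $e$, making $e$ a non-tree edge for that choice; if $e$ is a bridge, every closed path crosses it equally often in the two directions, so its signed count is $0$. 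You instead fix a single tree and argue through the cycle space: $\Phi(w)$ lies in $Z_{1}(\Gamma_{X}(H);\mathbb{Z})$, the fundamental cycles $\gamma_{e}$ form a $\mathbb{Z}$-basis of $Z_{1}$ whose coordinates are read off on the non-tree edges, hence divisibility of those coordinates forces $\Phi(w)\in mZ_{1}\subseteq mC_{1}$, i.e., divisibility on every edge, tree edges included. Your route avoids the bridge/non-bridge case distinction and the per-edge re-choice of tree, and it makes the basis-independence of the criterion transparent (your $\Phi$ is the Hurewicz map); the paper's route is more elementary, using nothing beyond graphs and spanning trees. Two cosmetic points, neither affecting correctness: your sum $\Phi(w)=\sum_{e}\phi_{e}(w)\,\gamma_{e}$ should be indexed by non-tree edges only, and the parenthetical ``equivalently, $mC_{1}\cap Z_{1}=mZ_{1}$'' is tangential to the implication you actually need --- it is the fundamental-cycle decomposition, which you do supply, that does the work.
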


\begin{proof}
Recall that $w\in K_{m}\left(H\right)$ if and only if when $w$ is
written as a word in a fixed but arbitrary basis, the total exponent
of every generator, counted with signs, is zero modulo $m$ (or zero
if $m=\infty$). Let $T$ be any spanning tree in the core graph $\Gamma_{X}\left(H\right)$.
There are $\mathrm{rank}\left(H\right)$ edges outside the tree, and
after an arbitrary orientation of these $\mathrm{rank}\left(H\right)$
edges, we obtain a basis for $H$: the element associated with the
oriented edge $\overrightarrow{e}$ is the one corresponding to the
closed path which goes from the root of $\Gamma_{X}\left(H\right)$
to the origin of $\overrightarrow{e}$ through $T$, traverses $\overrightarrow{e}$,
and returns to the root through $T$. Recall that every element of
$H$ corresponds to a closed, non-backtracking path at the base-point
of $\Gamma_{X}\left(H\right)$, and to write this element in the basis
we have just constructed, we simply keep track of each time the corresponding
closed path traverses one of the edges outside the spanning tree.
Now, if $w\in H$ traverses every edge of $\Gamma_{X}\left(H\right)$
a multiple of $m$ times, then any choice of spanning tree shows that
$w\in K_{m}\left(H\right)$.

Conversely, assume that $w\in K_{m}\left(H\right)$, and let $e$
be an edge of $\Gamma_{X}\left(H\right)$. If $e$ is not a bridge
(namely, not a separating edge the removal of which disconnects the
graph), then there is a spanning tree not containing $e$ and thus
$w$ traverses $e$ a multiple-of-$m$ times. If $e$ is a bridge,
then every closed path traverses it in a ``balanced'' fashion, namely,
the same number of times in each of the two directions.
\end{proof}
\begin{cor}
\label{cor:Q_m(w) defined algebraically}For $m\in\mathbb{Z}_{\ge2}\cup\left\{ \infty\right\} $,
\[
{\cal Q}_{m}\left(w\right)=\left\{ H\in\XF{\left\langle w\right\rangle }\,\middle|\,w\in K_{m}\left(H\right)\right\} .
\]
\end{cor}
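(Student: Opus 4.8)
The plan is to observe that this corollary is an immediate consequence of the preceding lemma, once one unwinds the definition of ${\cal Q}_{m}\left(w\right)$ introduced in Lemma \ref{lem:explicit rational expression}. The genuine content lies entirely in that lemma; here one has only to check that its hypothesis is automatically satisfied and that its conclusion matches the two defining conditions verbatim.

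First I would recall that, by definition, a subgroup $H$ lies in ${\cal Q}_{m}\left(w\right)$ precisely when $H\in\XF{\left\langle w\right\rangle }$ and the number of times $w$ traverses each edge of $\Gamma_{X}\left(H\right)$, counted with signs, is a multiple of $m$ (respectively equals $0$, when $m=\infty$). Thus both sets appearing in the corollary are subsets of the same ambient index set $\XF{\left\langle w\right\rangle }$, and it remains only to match the two selection criteria imposed on elements of this set.

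Next I would note that every $H\in\XF{\left\langle w\right\rangle }$ satisfies $\left\langle w\right\rangle \covers H$, hence in particular $\left\langle w\right\rangle \le H$, so that $w\in H$. This is exactly the standing hypothesis of the preceding lemma, which therefore applies to every such $H$. That lemma asserts that the signed edge-traversal condition holds if and only if $w\in K_{m}\left(H\right)$, covering uniformly both the finite case (``multiple of $m$'') and the case $m=\infty$ (``$0$''). Substituting this equivalence into the defining condition of ${\cal Q}_{m}\left(w\right)$ yields the claimed equality of the two sets at once.

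The only ``obstacle'' is a matter of bookkeeping: one must confirm that the two separate definitions of ${\cal Q}_{m}$ — the one for $m\in\mathbb{Z}_{\ge2}$ and the one for $m=\infty$ — are both captured by the single uniform statement of the lemma, and that the implicit containment $w\in H$ is correctly extracted from membership in $\XF{\left\langle w\right\rangle }$. Beyond these checks no further argument is required.
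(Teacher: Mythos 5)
Your proposal is correct and matches the paper's treatment: the paper states this corollary without a separate proof, precisely because it is immediate from the preceding lemma once one notes that every $H\in\XF{\left\langle w\right\rangle }$ contains $w$ (so the lemma applies) and that the lemma's signed-traversal condition is verbatim the defining condition of ${\cal Q}_{m}\left(w\right)$ in both the finite and the $m=\infty$ cases. Your bookkeeping checks are exactly the right ones and nothing further is needed.
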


The subgroups of minimal rank in ${\cal Q}_{m}\left(w\right)$ coincide
with the subgroups of minimal rank among those containing $w$ in
their ``$m$-kernel'':
\begin{lem}
\label{lem:minimal rank in Q_m(w)}For $w\in\F_{r}$ and $m\in\mathbb{Z}_{\ge2}\cup\left\{ \infty\right\} $,
the subgroups of minimal rank in ${\cal Q}_{m}\left(w\right)$ are
precisely 
\begin{equation}
\left\{ H\le\F_{r}\,\middle|\,w\in K_{m}\left(H\right),~\mathrm{rank}\left(H\right)=1-\ecm\left(w\right)\right\} .\label{eq:critical subgroups}
\end{equation}
In particular, the number of subgroup in the set \eqref{eq:critical subgroups}
is finite.
\end{lem}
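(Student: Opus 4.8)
The plan is to prove the set equality by sandwiching the minimal rank attained in ${\cal Q}_m(w)$ and then identifying the minimizers. Write $\rho=1-\ecm(w)=\min\{\mathrm{rank}(H)\mid w\in K_m(H)\}$, which is the definition of $\ecm$. By Corollary \ref{cor:Q_m(w) defined algebraically} every $H\in{\cal Q}_m(w)$ satisfies $w\in K_m(H)$, hence $\mathrm{rank}(H)\ge\rho$; so the minimal rank in ${\cal Q}_m(w)$ is at least $\rho$, and every minimal-rank element of ${\cal Q}_m(w)$ lies in the set \eqref{eq:critical subgroups}. The real content is the reverse inclusion: I must show that every subgroup $H\le\F_r$ (a priori not assumed to be in $\XF{\langle w\rangle}$) with $w\in K_m(H)$ and $\mathrm{rank}(H)=\rho$ already belongs to ${\cal Q}_m(w)$. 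This simultaneously forces the minimal rank in ${\cal Q}_m(w)$ to equal $\rho$ and shows that \eqref{eq:critical subgroups} consists exactly of minimizers.

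The key construction is to pass from such an $H$ to the subgraph swept out by $w$. Since $w\in H$ there is a graph morphism $\Gamma_X(\langle w\rangle)\to\Gamma_X(H)$; let $\Gamma'$ be its image and $H'\le H$ the corresponding subgroup, so that $\Gamma_X(H')=\Gamma'$ is a subgraph of $\Gamma_X(H)$ containing the root, and $\langle w\rangle\covers H'$, i.e. $H'\in\XF{\langle w\rangle}$. Because $\Gamma'$ is embedded in $\Gamma_X(H)$, the signed number of times $w$ traverses an edge of $\Gamma'$ is literally the same whether computed in $\Gamma_X(H')$ or in $\Gamma_X(H)$, while every edge of $\Gamma_X(H)$ outside $\Gamma'$ is traversed zero times. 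By the traversal-count criterion relating membership in $K_m$ to signed edge multiplicities (the lemma preceding Corollary \ref{cor:Q_m(w) defined algebraically}), $w\in K_m(H)$ therefore implies $w\in K_m(H')$, so that $H'\in{\cal Q}_m(w)$ and $\mathrm{rank}(H')\ge\rho$. On the other hand the subgraph inclusion $\Gamma'\subseteq\Gamma_X(H)$ gives $\mathrm{rank}(H')\le\mathrm{rank}(H)=\rho$; hence $\mathrm{rank}(H')=\mathrm{rank}(H)$.

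The main step is to upgrade this equality of ranks, together with the embedding $\Gamma_X(H')\hookrightarrow\Gamma_X(H)$, into the identification $H'=H$. This is exactly where the core-graph hypothesis is essential, since in general a finite-rank subgroup of equal rank need not coincide (for instance $\langle x^2\rangle\le\langle x\rangle$, where the core-graph map is a genuine cover rather than a subgraph inclusion). The argument I intend to run is combinatorial: extend a spanning tree of $\Gamma'$ to a spanning tree $T$ of $\Gamma_X(H)$ using only edges outside $\Gamma'$; since $\mathrm{rank}(H)=\mathrm{rank}(H')$, the number of non-tree edges is the same for both graphs, so all non-tree edges already lie in $\Gamma'$, and every edge of $\Gamma_X(H)$ outside $\Gamma'$ is a tree edge. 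Thus the extra vertices and edges form forests hanging off $\Gamma'$; as each such component meets $\Gamma'$ in at most one vertex (otherwise $T$ would contain a cycle), at least one of its leaves is a vertex outside $\Gamma'$, which is then a valence-one vertex of $\Gamma_X(H)$ distinct from the root. This contradicts $\Gamma_X(H)$ being a core graph unless there are no extra vertices. Hence $\Gamma'=\Gamma_X(H)$, $H'=H$, and so $H=H'\in{\cal Q}_m(w)$, as required.

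Combining the two bounds shows that the minimal rank in ${\cal Q}_m(w)$ equals $\rho=1-\ecm(w)$ and that its minimizers are precisely the subgroups in \eqref{eq:critical subgroups}; in the degenerate case $\ecm(w)=-\infty$ both sides are empty and there is nothing to prove. Finiteness is then immediate, since \eqref{eq:critical subgroups} is contained in ${\cal Q}_m(w)\subseteq\XF{\langle w\rangle}$, and $\XF{\langle w\rangle}$ is finite. I expect the only delicate point to be the passage from equal ranks to $H'=H$; the remaining steps are bookkeeping with the traversal-count criterion and the definition of $\ecm$.
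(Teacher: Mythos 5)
Your proposal is correct and follows essentially the same route as the paper: both pass to the image $\Gamma'$ of the core-graph morphism $\Gamma_{X}\left(\left\langle w\right\rangle \right)\to\Gamma_{X}\left(H\right)$, use the traversal-count criterion to see that the corresponding subgroup lies in ${\cal Q}_{m}\left(w\right)$ (hence contains $w$ in its $m$-kernel), and then use minimality of $\mathrm{rank}\left(H\right)$ to force $\Gamma'=\Gamma_{X}\left(H\right)$. The only difference is presentational: where the paper cites the standard fact that a proper subgraph inclusion of core graphs makes the smaller subgroup a \emph{proper free factor} (hence of strictly smaller rank), you re-derive the strict rank drop by hand via the spanning-tree/leaf argument, which is a correct inline proof of that same fact.
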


\begin{proof}
First, by Corollary \ref{cor:Q_m(w) defined algebraically}, every
group $H\in{\cal Q}_{m}\left(w\right)$ satisfies $w\in K_{m}\left(H\right)$,
and so every subgroup in ${\cal Q}_{m}\left(w\right)$ has rank at
least $1-\ecm\left(w\right)$. if $\ecm\left(w\right)=-\infty$, then
both sets considered in the lemma are empty.

Assume $\ecm\left(w\right)>-\infty$, and let $H\le\F_{r}$ satisfy
$w\in K_{m}\left(H\right)$ and $1-\mathrm{rank}\left(H\right)=\ecm\left(w\right)$.
We claim that $H\in{\cal Q}_{m}\left(w\right)$ -- this would show
that the minimal rank of subgroups in ${\cal Q}_{m}\left(w\right)$
is precisely $1-\ecm\left(w\right)$ and that the two sets considered
are identical. Indeed, assume by contradiction that $H\notin{\cal Q}_{m}\left(w\right)$.
Then the morphism $\Gamma_{X}\left(\left\langle w\right\rangle \right)\to\Gamma_{X}\left(H\right)$
is \emph{not} surjective, and the image of $\Gamma_{X}\left(\left\langle w\right\rangle \right)$
in $\Gamma_{X}\left(H\right)$ is a subgraph which is the core graph
of some $M\in{\cal Q}_{m}\left(w\right)$, and in particular $w\in K_{m}\left(M\right)$.
Moreover, $M$ is then a proper free factor of $H$ and thus has smaller
rank than $H$. This is impossible as it contradicts the definition
of $\ecm\left(w\right)$. Thus $H\in{\cal Q}_{m}\left(w\right)$.

Finally, as $\XF{\left\langle w\right\rangle }$ is finite, so is
${\cal Q}_{m}\left(w\right)$ and therefore so is the set in \eqref{eq:critical subgroups}.
\end{proof}
We can now complete the proof of Theorem \ref{thm:trace-of-generalized-permutations - baby version}
and show that for $G\left(N\right)=\ssn$ (when $m=\infty$) or $G\left(N\right)=$$\trw\left(\cmsn\right)$
(for $m\in\mathbb{Z}_{\ge2}$), $\trw\left(G\left(N\right)\right)=C\cdot N^{\chi_{m}\left(w\right)}+O\left(N^{\chi_{m}\left(w\right)-1}\right)$
with $C$ the size of the set in \eqref{eq:critical subgroups}.
\begin{proof}[Proof of Theorem \ref{thm:trace-of-generalized-permutations - baby version}]
 Let $m\in\mathbb{Z}_{\ge2}\cup\left\{ \infty\right\} $ and $G\left(N\right)=\cmsn$
or $G\left(N\right)=\ssn$ accordingly. The summand corresponding
to $H\in{\cal Q}_{m}\left(w\right)$ in \eqref{eq:rational formula for cmsn}
or in \eqref{eq:rational formula for s1sn} has leading term $N^{\#V\left(H\right)-\#E\left(H\right)}=N^{1-\mathrm{rank}\left(H\right)}$,
and so the summand is $N^{1-\mathrm{rank}\left(H\right)}+O\left(N^{-\mathrm{rank}\left(H\right)}\right)$.
By Lemma \ref{lem:minimal rank in Q_m(w)}, there are precisely $C$
elements in ${\cal Q}_{m}\left(w\right)$ of the minimal rank $1-\ecm\left(w\right)$
and all others have larger rank. Therefore

\[
\trw\left(G\left(N\right)\right)=C\cdot N^{\ecm\left(w\right)}+O\left(N^{\ecm\left(w\right)-1}\right).
\]
\end{proof}

\subsection{\label{subsec:The-second-term}The second term of the rational expressions}

Lemma \ref{lem:rational function} shows that for generalized symmetric
groups $G\left(N\right)$, the expected trace $\trw\left(G\left(N\right)\right)$
is given by a rational expression in $N$, and Theorem \ref{thm:trace-of-generalized-permutations - baby version}
gives an algebraic interpretation for the leading term of this expression.
We now want to strengthen Theorem \ref{thm:trace-of-generalized-permutations - baby version}
and show that the rational expression does not only tell us about
the subgroup $H\le\F_{r}$ of minimal rank with the property that
$w\in K_{m}\left(H\right)$, but also about the ``second'' minimal
group. If there is more than one group of minimal rank, this is already
captured by Theorem \ref{thm:trace-of-generalized-permutations - baby version}.
But we want to deal also with the case that there is a unique subgroup
as above of minimal rank.

To define the second minimal group, we do not rely on the set ${\cal Q}_{m}\left(w\right)$
which depends on the given basis $X$. Instead, we consider only \emph{algebraic
extensions} of $\left\langle w\right\rangle $ which also contain
$w$ in their ``$m$-kernel''. Namely, for $w\in\F_{r}$ and $m\in\mathbb{Z}_{\ge2}\cup\left\{ \infty\right\} $,
denote\marginpar{$\protect\ae_{m}\left(w\right)$} 
\[
\ae_{m}\left(w\right)\stackrel{\mathrm{def}}{=}\left\{ A\le\F_{r}\,\middle|\,\left\langle w\right\rangle \alg A~\mathrm{and}~w\in K_{m}\left(A\right)\right\} .
\]
If $A$ is an algebraic extension of $\left\langle w\right\rangle $
then $\left\langle w\right\rangle $ $X$-covers $A$ for every basis
$X$. Indeed, if $w\in A$ and $\left\langle w\right\rangle $ does
not $X$-cover $A$ then the image of $\Gamma_{X}\left(\left\langle w\right\rangle \right)$
in $\Gamma_{X}\left(A\right)$ constitutes an intermediate subgroup
which is a proper free factor of $A$. In particular, $\ae_{m}\left(w\right)\subseteq{\cal Q}_{m}\left(w\right)$.
Moreover, all subgroups $H$ of minimal rank with $w\in K_{m}\left(H\right)$
are algebraic extensions of $\left\langle w\right\rangle $, because
if $w\in K_{m}\left(H\right)$ and $w\in A\ff H$, then clearly $w\in K_{m}\left(A\right)$,
so $H$ cannot be of minimal rank unless it is an algebraic extension.
Thus
\[
\ecm\left(w\right)=1-\min_{A\in\ae_{m}\left(w\right)}\mathrm{rank}\left(A\right).
\]

\begin{defn}
Let $w\in\F_{r}$ and $m\in\mathbb{Z}_{\ge2}\cup\left\{ \infty\right\} $.
If $\left|\ae_{m}\left(w\right)\right|\le1$, define $\ecm^{\left(2\right)}\left(w\right)\stackrel{\mathrm{def}}{=}-\infty$
and $C_{m}^{\left(2\right)}\stackrel{\mathrm{def}}{=}0$. Otherwise,
let $A\in\ae_{m}\left(w\right)$ be an arbitrary subgroup of minimal
rank, and define
\[
\ecm^{\left(2\right)}\left(w\right)\stackrel{\mathrm{def}}{=}1-\min_{B\in\ae_{m}\left(w\right)\setminus\left\{ A\right\} }\mathrm{rank}\left(B\right).
\]
Also, define $C_{m}^{\left(2\right)}\left(w\right)$ to be the number
of subgroups $B$ in $\ae_{m}\left(w\right)\setminus\left\{ A\right\} $
of minimal rank, namely, with $\mathrm{rank}\left(B\right)=1-\ecm^{\left(2\right)}\left(w\right)$.
\end{defn}

Note that the numbers $\chi_{m}^{\left(2\right)}\left(w\right)$ and
$C_{m}^{\left(2\right)}\left(w\right)$ do not depend on the arbitrary
subgroup $A$. If the constant $C$ from Theorem \ref{thm:trace-of-generalized-permutations - baby version}
is at least two, then $\ecm^{\left(2\right)}\left(w\right)=\ecm\left(w\right)$
and $C_{m}^{\left(2\right)}\left(w\right)=C-1$. If $C=1$, then $\ecm^{\left(2\right)}\left(w\right)<\ecm\left(w\right)$.
\begin{thm}
\label{thm:two min rank}Fix $w\in\F_{r}$ and let $m\in\mathbb{Z}_{\ge2}$
in which case $G\left(N\right)=\cmsn$, or $m=\infty$ in which case
$G\left(N\right)=\ssn$. Then
\[
\trw\left(G\left(N\right)\right)=N^{\ecm\left(w\right)}+C_{m}^{\left(2\right)}\left(w\right)\cdot N^{\ecm^{\left(2\right)}\left(w\right)}+O\left(N^{\ecm^{\left(2\right)}\left(w\right)-1}\right).
\]
\end{thm}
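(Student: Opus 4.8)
The plan is to mirror the proof of the $S_{N}$ result, Theorem \ref{thm:PP15 subgroups}, while carrying the $K_{m}$-decoration throughout, so that the role played there by algebraic extensions of $\left\langle w\right\rangle$ is played here by the subset $\ae_{m}\left(w\right)$. The starting point is the rational expression $\trw\left(G\left(N\right)\right)=\sum_{H\in{\cal Q}_{m}\left(w\right)}L_{H,\F_{r}}^{X}\left(N\right)$ furnished by Lemma \ref{lem:explicit rational expression} and Corollary \ref{cor:Q_m(w) defined algebraically}, where ${\cal Q}_{m}\left(w\right)=\left\{ H\,\middle|\,\left\langle w\right\rangle \covers H,\ w\in K_{m}\left(H\right)\right\}$. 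First I would organize this sum by algebraic closure: every $H$ with $\left\langle w\right\rangle \le H$ factors uniquely as $\left\langle w\right\rangle \alg A_{H}\ff H$, and the key compatibility is that $K_{m}$ descends to free factors -- if $w\in K_{m}\left(H\right)$ and $w\in A_{H}\ff H$ then $w\in K_{m}\left(A_{H}\right)$, while conversely $w\in K_{m}\left(A\right)$ with $A\ff H$ forces $w\in K_{m}\left(H\right)$. Consequently $H\in{\cal Q}_{m}\left(w\right)$ if and only if $A_{H}\in\ae_{m}\left(w\right)$, so that ${\cal Q}_{m}\left(w\right)=\bigsqcup_{A\in\ae_{m}\left(w\right)}{\cal F}_{A}$ with ${\cal F}_{A}=\left\{ H\,\middle|\,A\ff H,\ \left\langle w\right\rangle \covers H\right\}$. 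Writing $\Psi_{A}\left(N\right)=\sum_{H\in{\cal F}_{A}}L_{H,\F_{r}}^{X}\left(N\right)$ this gives
\[
\trw\left(G\left(N\right)\right)=\sum_{A\in\ae_{m}\left(w\right)}\Psi_{A}\left(N\right).
\]
The crucial point is that each $\Psi_{A}$ is exactly the same fiber-sum that appears in the $S_{N}$ analysis; the $K_{m}$-constraint only dictates which fibers are summed.

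Next I would pin down the asymptotics of a single fiber-sum $\Psi_{A}$. Since $\left\langle w\right\rangle \alg A$ implies $\left\langle w\right\rangle \covers A$, one checks, by partitioning according to the algebraic closure of $A$ inside $M$ and using the same cover/free-factor bookkeeping as above, that for any $A$ the subgroup fixed-point count of \cite{PP15} decomposes as $\mathbb{E}\left[\#\mathrm{Fix}\,\varphi\left(A\right)\right]=\sum_{M\,:\,A\covers M}L_{M,\F_{r}}^{X}\left(N\right)=\sum_{A\alg A'}\Psi_{A'}\left(N\right)$, the sum ranging over all algebraic extensions $A'$ of $A$. As the algebraic extensions of a finitely generated subgroup form a finite poset under $\alg$, Möbius inversion yields $\Psi_{A}\left(N\right)=\sum_{A\alg A'}\mu\left(A,A'\right)\,\mathbb{E}\left[\#\mathrm{Fix}\,\varphi\left(A'\right)\right]$. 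Substituting the two-term expansion of Theorem \ref{thm:PP15 subgroups} for each $\mathbb{E}\left[\#\mathrm{Fix}\,\varphi\left(A'\right)\right]$, the leading term $N^{1-\mathrm{rank}\left(A'\right)}$ of each minimal proper algebraic extension (an atom above $A$, so $\mu=-1$, occurring exactly $C_{A}$ times, where $C_{A}$ is the constant of Theorem \ref{thm:PP15 subgroups} for $A$) cancels the second term $C_{A}N^{1-\pi\left(A\right)}$ of $\mathbb{E}\left[\#\mathrm{Fix}\,\varphi\left(A\right)\right]$; hence $\Psi_{A}\left(N\right)=N^{1-\mathrm{rank}\left(A\right)}+O\left(N^{-\pi\left(A\right)}\right)$, a correction at least two orders below the leading term.

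Finally I would assemble: ordering $\ae_{m}\left(w\right)$ by rank, the minimal rank $1-\ecm\left(w\right)$ contributes $C$ copies of $N^{\ecm\left(w\right)}$, the next rank contributes $C_{m}^{\left(2\right)}\left(w\right)$ copies of $N^{\ecm^{\left(2\right)}\left(w\right)}$, and all remaining leading terms together with the $O\bigl(N^{-\pi\left(A\right)}\bigr)$ corrections should fall at order $N^{\ecm^{\left(2\right)}\left(w\right)-1}$ or below. Rewriting $CN^{\ecm\left(w\right)}$ as $N^{\ecm\left(w\right)}+\left(C-1\right)N^{\ecm\left(w\right)}$ and recalling that $C\ge2$ forces $\ecm^{\left(2\right)}\left(w\right)=\ecm\left(w\right)$ and $C_{m}^{\left(2\right)}\left(w\right)=C-1$, while $C=1$ forces $\ecm^{\left(2\right)}\left(w\right)<\ecm\left(w\right)$, then produces the stated formula of Theorem \ref{thm:two min rank}.

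I expect the main obstacle to be precisely this last error bookkeeping when the two smallest ranks occurring in $\ae_{m}\left(w\right)$ are not consecutive. In that case the bound $O\bigl(N^{-\pi\left(A\right)}\bigr)$ on the correction of a minimal fiber $\Psi_{A}$ need not on its own be of order $N^{\ecm^{\left(2\right)}\left(w\right)-1}$, and one must instead argue -- exactly as in \cite{PP15}, whose Theorem \ref{thm:PP15 subgroups} already exhibits arbitrarily large gaps $\pi\left(H\right)-\mathrm{rank}\left(H\right)$ with vanishing intermediate coefficients -- that the coefficient of each intermediate power $N^{1-k}$ is a genuine count of the rank-$k$ members of $\ae_{m}\left(w\right)$, and therefore vanishes identically across the gap. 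Carrying the $K_{m}$-decoration through \cite{PP15}'s cancellation, i.e.\ verifying that restricting the sum to ${\cal Q}_{m}\left(w\right)$ leaves their Möbius argument intact with $\ae_{m}\left(w\right)$ replacing the full set of algebraic extensions, is the technical heart of the argument and the step most likely to require the detailed machinery imported from \cite{PP15}.
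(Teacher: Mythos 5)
Your decomposition of ${\cal Q}_{m}\left(w\right)$ into fibers indexed by $\ae_{m}\left(w\right)$ via the unique factorization $\left\langle w\right\rangle \alg A\ff H$, and the fiber estimate $\Psi_{A}\left(N\right)=N^{1-\mathrm{rank}\left(A\right)}+O\left(N^{-\pi\left(A\right)}\right)$, are exactly the paper's two main steps (the paper calls your $\Psi_{A}$ by the name $\mathrm{contrib}_{A}$). But the proposal has a genuine gap at precisely the point you flag as ``the technical heart'' and leave unresolved: when $\ae_{m}\left(w\right)$ has a unique element $A_{0}$ of minimal rank (the only case not already covered by Theorem \ref{thm:trace-of-generalized-permutations - baby version}), you must show that the error $O\left(N^{-\pi\left(A_{0}\right)}\right)$ in $\Psi_{A_{0}}$ is $O\left(N^{\ecm^{\left(2\right)}\left(w\right)-1}\right)$, i.e.\ that $\pi\left(A_{0}\right)\ge1-\ecm^{\left(2\right)}\left(w\right)$. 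You propose to get this by redoing the cancellation machinery of \cite{PP15} with a $K_{m}$-decoration, but no such machinery is needed, and your worry about non-consecutive ranks never materializes: since $\alg$ is transitive and $H\le J$ implies $K_{m}\left(H\right)\le K_{m}\left(J\right)$, every proper algebraic extension $B$ of $A_{0}$ is an algebraic extension of $\left\langle w\right\rangle $ satisfying $w\in K_{m}\left(A_{0}\right)\le K_{m}\left(B\right)$, hence $B\in\ae_{m}\left(w\right)\setminus\left\{ A_{0}\right\} $ and $\mathrm{rank}\left(B\right)\ge1-\ecm^{\left(2\right)}\left(w\right)$; minimizing over $B$ gives exactly the required bound on $\pi\left(A_{0}\right)$. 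This one-line observation is the idea your proposal is missing, and without it the final assembly does not close.

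Separately, your M\"obius-inversion derivation of the fiber estimate rests on a false counting claim: that the atoms above $A$ in the poset of algebraic extensions are the $C_{A}$ subgroups of rank $\pi\left(A\right)$ from Theorem \ref{thm:PP15 subgroups}, so that their leading terms alone cancel $C_{A}\cdot N^{1-\pi\left(A\right)}$. Take $A=\left\langle x^{4}\right\rangle $: then $\pi\left(A\right)=1$ and $C_{A}=2$, counting $\left\langle x^{2}\right\rangle $ and $\left\langle x\right\rangle $, but the only atom is $\left\langle x^{2}\right\rangle $, because $\left\langle x^{4}\right\rangle \alg\left\langle x^{2}\right\rangle \alg\left\langle x\right\rangle $; moreover $\pi\left(\left\langle x^{2}\right\rangle \right)=\pi\left(A\right)$, so the atom's own second term enters at the same order $N^{1-\pi\left(A\right)}$ and is in fact what rescues the cancellation. (Your aside that the correction is ``at least two orders below the leading term'' also fails here, since $\pi\left(A\right)=\mathrm{rank}\left(A\right)=1$.) The estimate itself is true, and the paper obtains it by a subtraction that never computes M\"obius coefficients: all proper algebraic extensions of $A$ have rank at least $\pi\left(A\right)$, exactly $C_{A}$ of them have rank equal to $\pi\left(A\right)$, each fiber trivially satisfies $\Psi_{A'}\left(N\right)=N^{1-\mathrm{rank}\left(A'\right)}+O\left(N^{-\mathrm{rank}\left(A'\right)}\right)$, and subtracting these finitely many contributions from the two-term expansion of $\mathbb{E}\left[\#\mathrm{Fix}\,\varphi\left(A\right)\right]$ in Theorem \ref{thm:PP15 subgroups} leaves $\Psi_{A}\left(N\right)=N^{1-\mathrm{rank}\left(A\right)}+O\left(N^{-\pi\left(A\right)}\right)$. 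I recommend replacing your inversion argument by this subtraction and inserting the transitivity observation above; with those two repairs your outline becomes the paper's proof.
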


The point of Theorem \ref{thm:two min rank} is that one can always
read off from the expression for $\trw\left(G\left(N\right)\right)$
the ranks of the two subgroups of minimal rank in $\ae_{m}\left(w\right)$.
In particular, we get the following corollary which we use below in
the proof of Theorem \ref{thm:surface words are separable}:
\begin{cor}
\label{cor:trw=00003Dn^chi}Fix $w\in\F_{r}$ and let $m\in\mathbb{Z}_{\ge2}$
in which case $G\left(N\right)=\cmsn$, or $m=\infty$ in which case
$G\left(N\right)=\ssn$. Then $\trw\left(G\left(N\right)\right)$
is of the form $N^{\chi}$ (for some $\chi\in\mathbb{Z}$) if and
only if $\left|\ae_{m}\left(w\right)\right|=1$.
\end{cor}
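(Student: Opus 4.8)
The goal is to prove Corollary~\ref{cor:trw=00003Dn^chi}, which characterizes exactly when $\trw\left(G\left(N\right)\right)$ is a pure power $N^{\chi}$ in terms of the cardinality of $\ae_{m}\left(w\right)$. The plan is to deduce this directly from Theorem~\ref{thm:two min rank}, which we are entitled to assume. The proof should be short and split into the two implications.

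First I would handle the easy direction ($\Leftarrow$). Suppose $\left|\ae_{m}\left(w\right)\right|=1$. Then by definition $\ecm^{\left(2\right)}\left(w\right)=-\infty$ and $C_{m}^{\left(2\right)}\left(w\right)=0$, so the formula in Theorem~\ref{thm:two min rank} collapses: all the terms beyond the leading one carry the coefficient $C_{m}^{\left(2\right)}\left(w\right)=0$ and hence vanish, leaving $\trw\left(G\left(N\right)\right)=N^{\ecm\left(w\right)}$, which is exactly of the form $N^{\chi}$ with $\chi=\ecm\left(w\right)\in\mathbb{Z}$. (One should note in passing that $\left|\ae_{m}\left(w\right)\right|\ge1$ whenever the leading term is present, i.e.\ whenever $\ecm\left(w\right)\neq-\infty$; when $\ecm\left(w\right)=-\infty$ the set $\ae_{m}\left(w\right)$ is empty and $\trw\left(G\left(N\right)\right)\equiv0$, which I would treat as the degenerate sub-case and argue separately, observing that $0$ is not of the form $N^{\chi}$ for any integer $\chi$, so the equivalence still reads correctly since $\left|\emptyset\right|=0\ne1$.)

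For the converse ($\Rightarrow$) I would argue by contrapositive: assume $\left|\ae_{m}\left(w\right)\right|\neq1$ and show $\trw\left(G\left(N\right)\right)$ is not a pure power. If $\left|\ae_{m}\left(w\right)\right|=0$ then $\ecm\left(w\right)=-\infty$ and $\trw\left(G\left(N\right)\right)\equiv0$, which is not $N^{\chi}$ for any $\chi\in\mathbb{Z}$. If $\left|\ae_{m}\left(w\right)\right|\ge2$ then by definition $C_{m}^{\left(2\right)}\left(w\right)\ge1$ and $\ecm^{\left(2\right)}\left(w\right)>-\infty$ is a genuine integer strictly less than or equal to $\ecm\left(w\right)$. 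Theorem~\ref{thm:two min rank} then gives
\[
\trw\left(G\left(N\right)\right)=N^{\ecm\left(w\right)}+C_{m}^{\left(2\right)}\left(w\right)\cdot N^{\ecm^{\left(2\right)}\left(w\right)}+O\left(N^{\ecm^{\left(2\right)}\left(w\right)-1}\right),
\]
where the second term has a strictly positive integer coefficient. The main obstacle — really the only point requiring care — is to argue that such an expression cannot equal a single monomial $N^{\chi}$. I would do this by examining the rational expression for $\trw\left(G\left(N\right)\right)$ coming from Lemma~\ref{lem:explicit rational expression}: it is a sum of the functions $L_{H,\F_{r}}^{X}\left(N\right)$, each a ratio of falling factorials, and I would compare asymptotic expansions. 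Since $C_{m}^{\left(2\right)}\left(w\right)\ge1$, the coefficient of $N^{\ecm^{\left(2\right)}\left(w\right)}$ in the full asymptotic expansion of $\trw\left(G\left(N\right)\right)$ is nonzero, so at least two distinct powers of $N$ appear with nonzero coefficients in the expansion of the rational function about $N=\infty$. A pure power $N^{\chi}$ has exactly one nonzero coefficient in its expansion, so the two cannot be equal, and we conclude $\trw\left(G\left(N\right)\right)$ is not of the form $N^{\chi}$. This completes the contrapositive and hence the corollary.
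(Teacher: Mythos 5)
Your overall strategy is the paper's own: the corollary is stated there as an immediate consequence of Theorem \ref{thm:two min rank}, and both directions of your argument draw on exactly that theorem (plus the vanishing statement in Theorem \ref{thm:trace-of-generalized-permutations - baby version} for the empty case). Your ($\Leftarrow$) direction is correct: when $\left|\ae_{m}\left(w\right)\right|=1$ one has $\ecm^{\left(2\right)}\left(w\right)=-\infty$ and $C_{m}^{\left(2\right)}\left(w\right)=0$, and -- reading $N^{-\infty}$ as $0$, or, more rigorously, combining the resulting super-polynomial decay of $\trw\left(G\left(N\right)\right)-N^{\ecm\left(w\right)}$ with the rationality given by Lemma \ref{lem:rational function} -- this yields $\trw\left(G\left(N\right)\right)=N^{\ecm\left(w\right)}$ exactly. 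The case $\ae_{m}\left(w\right)=\emptyset$ is also handled correctly.

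The gap is in the contrapositive direction, in the sub-case where $\ae_{m}\left(w\right)$ contains two or more subgroups of the \emph{same} minimal rank. The paper points out, just before Theorem \ref{thm:two min rank}, that if the constant $C$ of Theorem \ref{thm:trace-of-generalized-permutations - baby version} is at least $2$, then $\ecm^{\left(2\right)}\left(w\right)=\ecm\left(w\right)$ and $C_{m}^{\left(2\right)}\left(w\right)=C-1$. In that sub-case the two powers $N^{\ecm\left(w\right)}$ and $N^{\ecm^{\left(2\right)}\left(w\right)}$ coincide, so your inference ``the coefficient of $N^{\ecm^{\left(2\right)}\left(w\right)}$ is nonzero, hence at least two distinct powers of $N$ appear with nonzero coefficients'' is invalid: Theorem \ref{thm:two min rank} then guarantees only one nonzero coefficient, namely $1+C_{m}^{\left(2\right)}\left(w\right)=C$ at the power $N^{\ecm\left(w\right)}$, and says nothing about lower-order coefficients. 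A priori $\trw\left(G\left(N\right)\right)$ could equal, say, exactly $2N^{-1}$, which also has exactly one nonzero coefficient in its expansion at infinity, so your counting argument cannot distinguish it from a pure power. This sub-case is not vacuous: for $w=x^{2}y^{3}x^{2}y^{-1}$ and $m=2$, the paper exhibits three rank-$2$ subgroups realizing $\ect\left(w\right)=-1$, and indeed $\trw\left(\ctsn\right)=\frac{3N-4}{N\left(N-1\right)}$ has leading coefficient $3$. (Your phrase ``strictly less than or equal to'' suggests this is precisely the point that got blurred.) The repair is one sentence: when $\ecm^{\left(2\right)}\left(w\right)=\ecm\left(w\right)$, the leading coefficient of $\trw\left(G\left(N\right)\right)$ is $1+C_{m}^{\left(2\right)}\left(w\right)\ge2$, whereas a pure power $N^{\chi}$ has leading coefficient $1$, so equality is impossible; your two-distinct-powers argument then applies verbatim in the complementary sub-case $\ecm^{\left(2\right)}\left(w\right)<\ecm\left(w\right)$, where it does work.
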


\begin{proof}[Proof of Theorem \ref{thm:two min rank}]
 We claim that
\[
{\cal Q}_{m}\left(w\right)=\bigcup_{A\in\ae_{m}\left(w\right)}\left\{ H\le\F_{r}\,\middle|\,A\ff_{\Xcov}H\right\} .
\]
Indeed, as mentioned in Section \ref{sec:Prelimiaries} above, for
every extension of free groups $J_{1}\le J_{2}$, there is a unique
intermediate subgroup $A$ such that $J_{1}\alg A\ff J_{2}$ (see
\cite[Claim 4.5]{PP15} for the proof in the finitely generated case,
which is the case we need here). If $H\in{\cal Q}_{m}\left(w\right)$
and $A$ is the unique intermediate subgroup $\left\langle w\right\rangle \alg A\ff H$
then $w\in K_{m}\left(A\right)$ and so $A\in\ae_{m}\left(w\right)$.
Moreover, in this case $A\covers H$ because the surjective core-graph
morphism $\Gamma_{X}\left(\left\langle w\right\rangle \right)\to\Gamma_{X}\left(H\right)$
factors as $\Gamma_{X}\left(\left\langle w\right\rangle \right)\stackrel{\alpha_{1}}{\to}\Gamma_{X}\left(A\right)\stackrel{\alpha_{2}}{\to}\Gamma_{X}\left(H\right)$,
so $\alpha_{2}$ must too be surjective. On the other hand, if $A\in\ae_{m}\left(w\right)$
and $A\ff_{\Xcov}H$ then $\left\langle w\right\rangle \covers A\covers H$
and as ``$\covers$'' is transitive, $\left\langle w\right\rangle \covers H$.
In addition, $w\in K_{m}\left(A\right)\le K_{m}\left(H\right)$. Hence
$H\in{\cal Q}_{m}\left(w\right)$.

Thus, we get from Lemma \ref{lem:explicit rational expression} that
if for an arbitrary finitely generated subgroup $A\le\F_{r}$ we denote
\begin{equation}
\mathrm{contrib}_{A}\left(N\right)\stackrel{\mathrm{def}}{=}\sum_{H\le\F_{r}\thinspace\mathrm{s.t.}~A\ff_{\Xcov}H}L_{H,\F_{r}}^{X}\left(N\right),\label{eq:contrib}
\end{equation}
then
\begin{equation}
\trw\left(G\left(N\right)\right)=\sum_{A\in\ae_{m}\left(w\right)}\mathrm{contrib}_{A}\left(N\right).\label{eq:sum by AE_m}
\end{equation}
Now let $A\le\F_{r}$ be an arbitrary finitely generated subgroup.
Because $L_{H,\F_{r}}^{X}\left(N\right)=N^{1-\mathrm{rank}\left(H\right)}+O\left(N^{-\mathrm{rank}\left(H\right)}\right)$,
and all the subgroups $H$ in \eqref{eq:contrib} satisfy $\mathrm{rank}\left(A\right)\le\mathrm{rank}\left(H\right)$
with equality if and only if $A=H$, it is clear that
\begin{equation}
\mathrm{contrib}_{A}\left(N\right)=N^{1-\mathrm{rank}A}+O\left(N^{-\mathrm{rank}A}\right).\label{eq:first upper bound on contrib}
\end{equation}
Recall Theorem \ref{thm:PP15 subgroups} above (originally \cite[Theorem 1.8]{PP15}),
by which the expected number of points in $\left\{ 1,\ldots,N\right\} $
fixed by all elements of $\varphi\left(A\right)$ in a uniformly random
$\varphi\in\mathrm{Hom}\left(\F_{r},S_{N}\right)$ is
\begin{equation}
N^{1-\mathrm{rank}\left(A\right)}+C\cdot N^{1-\pi\left(A\right)}+O\left(N^{-\pi\left(A\right)}\right),\label{eq:thm 1.8 for L}
\end{equation}
where $C$ is the number of subgroups $J\le\F_{r}$ of rank $\pi\left(A\right)$
and which contain $A$ but not as a free factor. Parallel to \eqref{eq:formula for fixed points of a word in S_N},
which gives a formula for the expected number of fixed points of a
single word, the expected number of common fixed points of $A$ can
be computed by
\begin{equation}
\sum_{H\in\XF A}L_{H,\F_{r}}^{X}\left(N\right)\label{eq:formula for fixed points of subgroup}
\end{equation}
(recall that $\XF A=\left\{ H\le\F_{r}\,\middle|\,A\covers X\right\} $)
-- see \cite[Section 5]{Puder2014}. As in the case of single words,
$\pi\left(A\right)$ is precisely the smallest rank of a proper algebraic
extension of $A$. The primitivity rank of $A$ is sometimes smaller
than or equal to $\mathrm{rank}\left(A\right)$, but in the cases
where $\pi\left(A\right)>\mathrm{rank}\left(A\right)$, Theorem \ref{thm:PP15 subgroups}
and \eqref{eq:thm 1.8 for L} can be interpreted as follows: in the
formula \eqref{eq:formula for fixed points of subgroup} there is
a leading term of $N^{1-\mathrm{rank}\left(A\right)}$ coming from
$L_{A,\F_{r}}^{X}\left(N\right)$, but then all contributions coming
from free extensions of $A$ in $\XF A$, together with $L_{A,\F_{r}}^{X}\left(N\right)-N^{1-\mathrm{rank}\left(A\right)}$,
cancel out in all terms of order $N^{-\mathrm{rank}\left(A\right)},N^{-\mathrm{rank}\left(A\right)-1},\ldots,N^{1-\pi\left(A\right)}$.
(The positive coefficient of $N^{1-\pi\left(A\right)}$ comes from
algebraic extensions of $A$, not from free extensions.) Hence,
\begin{equation}
\mathrm{contrib}_{A}\left(N\right)=N^{1-\mathrm{rank}\left(A\right)}+O\left(N^{-\pi\left(A\right)}\right).\label{eq:second upper bound on contrib}
\end{equation}

We may assume that $\ecm^{\left(2\right)}\left(w\right)<\ecm\left(w\right)$,
for otherwise Theorem \ref{thm:two min rank} follows immediately
from Theorem \ref{thm:trace-of-generalized-permutations - baby version}.
Let $A_{0}\in\ae_{m}\left(w\right)$ be the unique algebraic extension
with $w\in K_{m}\left(A_{0}\right)$ of rank $1-\ecm\left(w\right)$,
and let $A_{1},\ldots,A_{C_{m}^{\left(2\right)}\left(w\right)}$ be
those of rank $1-\ecm^{\left(2\right)}\left(w\right)$. Because algebraic
extensions is a transitive relation and $H\le J\Longmapsto K_{m}\left(H\right)\le K_{m}\left(J\right)$,
every proper algebraic extension of $A_{0}$ is also in $\ae_{m}\left(w\right)$
and so its rank is at least $1-\ecm^{\left(2\right)}\left(w\right)$.
In particular, $\pi\left(A_{0}\right)\ge1-\ecm^{\left(2\right)}\left(w\right)$
for every $A\in\ae_{m}\left(w\right)$. From \eqref{eq:first upper bound on contrib}
and \eqref{eq:second upper bound on contrib} it now follows that
\[
\mathrm{contrib}_{A}\left(N\right)=\begin{cases}
N^{\ecm\left(w\right)}+O\left(N^{\ecm^{\left(2\right)}\left(w\right)-1}\right) & \mathrm{if}~A=A_{0}\\
N^{\ecm^{\left(2\right)}\left(w\right)}+O\left(N^{\ecm^{\left(2\right)}\left(w\right)-1}\right) & \mathrm{if}~A=A_{1},\ldots,A_{C_{m}^{\left(2\right)}\left(w\right)}\\
O\left(N^{\ecm^{\left(2\right)}\left(w\right)-1}\right) & \mathrm{if}~A\in\ae_{m}\left(w\right)\setminus\left\{ A_{0},A_{1},\ldots,A_{C_{m}^{\left(2\right)}\left(w\right)}\right\} 
\end{cases}.
\]
Plugging these expressions in \eqref{eq:sum by AE_m} completes the
proof of Theorem \ref{thm:two min rank}.
\end{proof}
A nice corollary of Theorem \ref{thm:trace-of-generalized-permutations - baby version}
is the following. Recall from Definition \ref{def:cl} that $\cl\left(w\right)$
denotes the commutator length of $w$ and $\sql\left(w\right)$ denotes
the square length of $w$.
\begin{cor}
\label{cor:lower bound for cl,sq}We have 
\[
\chi_{\infty}\left(w\right)\ge1-2\cdot\cl\left(w\right)~~~~\mathrm{and}~~~~\chi_{2}\left(w\right)\ge1-\min\left(\sql\left(w\right),2\cl\left(w\right)\right).
\]
In particular, 
\[
\trw\left(\ssn\right)=\Omega\left(N^{1-2\cl\left(w\right)}\right)~~~~\mathrm{and}~~~~\trw\left(\ctsn\right)=\Omega\left(N^{1-\min\left(\sql\left(w\right),2\cl\left(w\right)\right)}\right).
\]
\end{cor}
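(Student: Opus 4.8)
The plan is to derive the two inequalities for $\chi_{\infty}\left(w\right)$ and $\chi_{2}\left(w\right)$ directly from Definition \ref{def:mth primitivity rank}, by producing, from an optimal commutator (or square) decomposition of $w$, an explicit subgroup $H\le\F_{r}$ whose $m$-kernel contains $w$ and whose rank is controlled. The $\Omega$-statements will then be read off from Theorem \ref{thm:trace-of-generalized-permutations - baby version}. This is exactly the mechanism sketched in the overview of the proof: the factors of a decomposition generate a subgroup that witnesses the minimum in \eqref{eq:def of ECM}.

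First I would handle $\chi_{\infty}$. Setting $g=\cl\left(w\right)$ and fixing $w=\left[u_{1},v_{1}\right]\cdots\left[u_{g},v_{g}\right]$, let $H=\left\langle u_{1},v_{1},\ldots,u_{g},v_{g}\right\rangle$. Then $w$ is a product of commutators of elements of $H$, so by the stated characterization of products of commutators, $w\in\left[H,H\right]=K_{\infty}\left(H\right)$, while $H$ is generated by $2g$ elements and hence $\mathrm{rank}\left(H\right)\le 2g$. Feeding this $H$ into the minimum defining $\chi_{\infty}\left(w\right)$ gives $\chi_{\infty}\left(w\right)\ge 1-\mathrm{rank}\left(H\right)\ge 1-2\cl\left(w\right)$. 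For $\chi_{2}$ the same $H$ still works, since $K_{\infty}\left(H\right)\subseteq K_{2}\left(H\right)$ (zero total exponent of each generator is in particular even total exponent), yielding $\chi_{2}\left(w\right)\ge 1-2\cl\left(w\right)$. Independently, writing $g=\sql\left(w\right)$ with $w=u_{1}^{2}\cdots u_{g}^{2}$ and $H'=\left\langle u_{1},\ldots,u_{g}\right\rangle$, the word $w$ is a product of squares of elements of $H'$, so $w\in K_{2}\left(H'\right)$ with $\mathrm{rank}\left(H'\right)\le g$, giving $\chi_{2}\left(w\right)\ge 1-\sql\left(w\right)$. Taking the larger of the two lower bounds gives $\chi_{2}\left(w\right)\ge 1-\min\left(\sql\left(w\right),2\cl\left(w\right)\right)$.

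For the ``in particular'' statements I would invoke Theorem \ref{thm:trace-of-generalized-permutations - baby version}. For $G\left(N\right)=\ssn$ it gives $\trw\left(\ssn\right)=C\cdot N^{\chi_{\infty}\left(w\right)}+O\left(N^{\chi_{\infty}\left(w\right)-1}\right)$ with $C$ a natural number, and $C\ge 1$ whenever $\chi_{\infty}\left(w\right)\ne-\infty$ (the defining set of subgroups is then nonempty). Hence the leading coefficient is positive and $\trw\left(\ssn\right)=\Omega\left(N^{\chi_{\infty}\left(w\right)}\right)$; since $\chi_{\infty}\left(w\right)\ge 1-2\cl\left(w\right)$ forces $N^{\chi_{\infty}\left(w\right)}\ge N^{1-2\cl\left(w\right)}$ for all $N\ge 1$, we conclude $\trw\left(\ssn\right)=\Omega\left(N^{1-2\cl\left(w\right)}\right)$. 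The identical argument with $m=2$ and $G\left(N\right)=\ctsn$ produces $\trw\left(\ctsn\right)=\Omega\left(N^{1-\min\left(\sql\left(w\right),2\cl\left(w\right)\right)}\right)$. When $\cl\left(w\right)$ (respectively $\min\left(\sql\left(w\right),2\cl\left(w\right)\right)$) is infinite the asserted bound is vacuous, so we may assume finiteness; and finiteness is precisely what guarantees the relevant $\chi_{m}\left(w\right)$ is not $-\infty$, hence that $C\ge 1$.

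There is no substantial obstacle here: the argument is bookkeeping on top of the definitions and Theorem \ref{thm:trace-of-generalized-permutations - baby version}. The two points that must be handled with care are that the positivity of the leading coefficient $C$ is exactly what upgrades the leading asymptotic term into a genuine $\Omega$-lower bound (so one must confirm $C\ge 1$, equivalently that the minimizing set of subgroups is nonempty, which is the case iff $\chi_{m}\left(w\right)\ne-\infty$), and that the nesting $K_{\infty}\left(H\right)\subseteq K_{2}\left(H\right)$ is what allows the commutator-length estimate to also bound $\chi_{2}\left(w\right)$ and thereby contribute the $2\cl\left(w\right)$ term alongside the square-length term.
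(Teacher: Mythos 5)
Your proposal is correct and follows essentially the same route as the paper: take the optimal commutator (resp. square) decomposition, let $J$ be the subgroup generated by its factors, observe $w\in K_{\infty}\left(J\right)\subseteq K_{2}\left(J\right)$ (resp. $w\in K_{2}\left(J\right)$) with $\mathrm{rank}\left(J\right)$ bounded by the number of factors, and feed this into the definition of $\ecm$. Your extra care in spelling out why the leading coefficient $C\ge1$ upgrades Theorem \ref{thm:trace-of-generalized-permutations - baby version} to the $\Omega$-bounds (and the vacuousness of the infinite case) is a point the paper leaves implicit, but it is the same argument.
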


\begin{proof}
If $g=\cl\left(w\right)$ then there are $2g$ words $u_{1},v_{1},\ldots,u_{g},v_{g}\in\F_{r}$
such that $w=\left[u_{1},v_{1}\right]\cdots\left[u_{g},v_{g}\right]$.
Let $J=\left\langle u_{1},v_{1},\ldots,u_{g},v_{g}\right\rangle $,
and note that $w\in K_{\infty}\left(J\right)$ and $w\in K_{2}\left(J\right)$
and that $\mathrm{rank}\left(J\right)\le2g$. Hence $\chi_{\infty}\left(w\right)\ge1-\rk\left(J\right)\ge1-2g$
and likewise $\ect\left(w\right)\ge1-2g$.

Similarly, if $g=\sql\left(w\right)$ then there are $g$ words $u_{1},\ldots,u_{g}\in\F_{r}$
such that $w=u_{1}^{2}\cdots u_{g}^{2}$. Let $J=\left\langle u_{1},\ldots,u_{g}\right\rangle $,
and note that $w\in K_{2}\left(J\right)$ and that $\mathrm{rank}\left(J\right)\le g$.
Thus $\chi_{2}\left(w\right)\ge1-\rk\left(J\right)\ge1-g$.
\end{proof}

\section{\label{sec:Surface-Words-proof}Surface words and the proof of Theorem
\ref{thm:surface words are separable}}

\subsection{Orientable surface words}

First, we prove the first part of Theorem \ref{thm:surface words are separable},
which deals with the orientable surface word $\left[x_{1},y_{1}\right]\cdots\left[x_{g},y_{g}\right]$.
\begin{proof}[Proof of Theorem \ref{thm:surface words are separable}, orientable
case]
 Assume that some word $w\in\F_{r}$ induces the same measure as\linebreak{}
$\left[x_{1},y_{1}\right]\cdots\left[x_{g},y_{g}\right]$ on every
compact group $G$. In particular, the expected value of any irreducible
character $\psi$ of $G$ under the $w$-measure is $\left(\psi\left(e\right)\right)^{1-2g}$.
In the case of the unitary groups $\U\left(N\right)$, the trace is
an irreducible $N$-dimensional character and thus $\trw\left(\U\left(N\right)\right)=N^{1-2g}$.
From Theorem \ref{thm:Un} it now follows that $\cl\left(w\right)\le g$.
In particular, $w\in\left[\F_{r},\F_{r}\right]$.

On the other hand, the trace of $\ssn$ is also an $N$-dimensional
irreducible character, and so $\trw\left(\ssn\right)=N^{1-2g}$. From
Theorem \ref{thm:trace-of-generalized-permutations - baby version}
it follows that $\eci\left(w\right)=1-2g$. If $w=\left[u_{1},v_{1}\right]\cdots\left[u_{\cl\left(w\right)},v_{\cl\left(w\right)}\right]$
then, as in Corollary \ref{cor:lower bound for cl,sq} and its proof,
\begin{equation}
1-2g=\eci\left(w\right)\ge1-\mathrm{rank}\left(J\right)\ge1-2\cl\left(w\right),\label{eq:g and cl}
\end{equation}
where $J=\left\langle u_{1},v_{1},\ldots,u_{\cl\left(w\right)},v_{\cl\left(w\right)}\right\rangle $,
and we obtain that $\cl\left(w\right)\ge g$.

Thus $\cl\left(w\right)=g$. Moreover, all the weak inequalities in
\eqref{eq:g and cl} are equalities, and $\mathrm{rank}\left(J\right)=2\cl\left(w\right)=1-\eci\left(w\right)$.
This shows that $J$ has minimal rank among the subgroups with $w\in K_{\infty}\left(w\right)$,
and so $J\in\ae_{\infty}\left(w\right)$. In addition, $J$ is a free
factor of $\F_{r}$: otherwise, it would have a non-trivial algebraic
extension $J\lvertneqq_{\mathrm{alg}}A\ff\F_{r}$, and then $A\in\ae_{\infty}\left(w\right)$
and $\left|\ae_{\infty}\left(w\right)\right|\ge2$, in contradiction
to Corollary \ref{cor:trw=00003Dn^chi} which applies in this case.

As $\mathrm{rank}\left(J\right)=2\cl\left(w\right)=2g$, the words
$u_{1},v_{1},\ldots,u_{g},v_{g}$ are free and constitute a basis
for $J$, and as $H\ff\F_{r}$, they are part of a basis for $\F_{r}$.
Therefore $r\ge2g$ and $w\auteqr\left[x_{1},y_{1}\right]\cdots\left[x_{g},y_{g}\right]$.
\end{proof}
\begin{rem}
\label{rem:s1sn can be replaced by finite groups}We mentioned above
that Conjecture \ref{conj:shalev minus} sometimes appears in the
literature in a stronger version, where only finite groups are involved
rather than all compact groups. In our proof of the conjecture for
the case of orientable surface words, we used two compact infinite
groups: $\U\left(N\right)$ and $\ssn$. However, the latter can be
easily replaced by finite groups: let $\left|w\right|$ denote the
length of the word $w$. If $m>\left|w\right|$, then $w$ cannot
traverse any edge of a core graph $m$ times, $2m$ times, or $-m$
times (when counted with signs). So in this case, if $w\in K_{m}\left(H\right)$
then also $w\in K_{\infty}\left(H\right)$, and $\trw\left(\cmsn\right)=\trw\left(\ssn\right)$
for all $N$. So for every $w$, one may replace the group $\ssn$
in the proof above with the group $\cmsn$ for any $m\ge\left|w\right|$.
This means that the only infinite groups one actually needs for the
proof are $\U\left(N\right)$. See also Question \ref{enu:remove compact}
in Section \ref{sec:Open-Questions}.
\end{rem}

\subsection{Non-orientable surface words}
\begin{proof}[Proof of Theorem \ref{thm:surface words are separable}, non-orientable
case]
 Assume that some word $w\in\F_{r}$ induces the same measure as
$x_{1}^{2}\cdots x_{g}^{2}$ on every compact group. In particular,
the expected value of any irreducible character $\psi$ of $G$ under
the $w$-measure in $\frac{\left(\fs_{\psi}\right)^{g}}{\left(\dim\psi\right)^{g-1}}$.
In the case of the group $\ssn$, the trace is an irreducible $N$-dimensional
character with $\fs=0$ and thus $\trw\left(\ssn\right)=0$. From
Corollary \ref{cor:lower bound for cl,sq} we deduce that $\cl\left(w\right)=\infty$,
namely, $w\notin\left[\F_{r},\F_{r}\right]$.

In the case of the orthogonal groups $\O\left(N\right)$, the trace
is an irreducible $N$-dimensional real character with $\fs=1$ and
thus $\trw\left(\O\left(N\right)\right)=N^{1-g}$. As $\cl\left(w\right)=\infty$,
Theorem \ref{thm:On} says in this case that $\trw\left(\O\left(N\right)\right)=O\left(N^{1-\sql\left(w\right)}\right)$.
It follows that $\sql\left(w\right)\le g$.

On the other hand, the trace of $\ctsn$ is also $N$-dimensional
irreducible with $\fs=1$, and so $\trw\left(\ctsn\right)=N^{1-g}$.
From Theorem \ref{thm:trace-of-generalized-permutations - baby version}
it follows that $\ect\left(w\right)=1-g$. If $w=u_{1}^{2}\cdots u_{\sql\left(w\right)}^{2}$
then $w\in K_{2}\left(J\right)$ where $J=\left\langle u_{1},\ldots,u_{\sql\left(w\right)}\right\rangle $.
Hence 
\begin{equation}
1-g=\ect\left(w\right)\ge1-\mathrm{rank}\left(J\right)\ge1-\sql\left(w\right)\label{eq:g and sql}
\end{equation}
and we obtain that $\sql\left(w\right)\ge g$.

Thus $\sql\left(w\right)=g$. Moreover, all the weak inequalities
in \eqref{eq:g and sql} are equalities, and $\mathrm{rank}\left(J\right)=\sql\left(w\right)=1-\ect\left(w\right)$.
This shows that $J$ has minimal rank among the subgroups with $w\in K_{2}\left(w\right)$,
and so $J\in\ae_{2}\left(w\right)$. In addition, $J$ is a free factor
of $\F_{r}$: otherwise, it would have a non-trivial algebraic extension
$J\lvertneqq_{\mathrm{alg}}A\ff\F_{r}$, and then $A\in\ae_{2}\left(w\right)$
and $\left|\ae_{2}\left(w\right)\right|\ge2$, in contradiction to
Corollary \ref{cor:trw=00003Dn^chi} which applies in this case.

As $\mathrm{rank}\left(J\right)=\sql\left(w\right)=g$, the words
$u_{1},\ldots,u_{g}$ are free and constitute a basis for $J$, and
as $J\ff\F_{r}$, they are part of a basis for $\F_{r}$. Therefore
$r\ge g$ and $w\auteqr x_{1}^{2}\cdots x_{g}^{2}$.
\end{proof}
\begin{rem}
\label{rem:s1sn can be replace by finite - again}As in the orientable
case, the small role of the infinite group $\ssn$ in the last proof
can be also played by the groups $\cmsn$ for large enough $m$.
\end{rem}

\section{\label{sec:Open-Questions}Open Questions}

We conclude with some open questions naturally arising from the results
in this paper.
\begin{enumerate}
\item \label{enu:remove compact}Can Theorem \ref{thm:surface words are separable}
be proven also based on word measures on finite groups only? Namely,
can the role played in the proof by $\U\left(N\right)$ and $\O\left(N\right)$
be also played by some finite groups? (And see Remarks \ref{rem:s1sn can be replaced by finite groups}
and \ref{rem:s1sn can be replace by finite - again}.)
\item \label{enu:words with single alg ext}Corollary \ref{cor:trw=00003Dn^chi}
has the potential of yielding a solution of more special cases of
Conjecture \ref{conj:shalev minus}. Namely, if there is a relatively
small set of $\mathrm{Aut}\big(\F_{r}\big)$-orbits, along surface
words, with the property that $\left|\ae_{m}\right|=1$ for some $m\in\mathbb{Z}_{\ge2}\cup\left\{ \infty\right\} $,
then one can hope to prove Conjecture \ref{conj:shalev minus} for
these orbits. Let us mention two examples: the words $\left[x,y\right]^{2}$
and $\left[x,y\right]\left[x,z\right]$ both satisfy that $\left|\ae_{\infty}\left(w\right)\right|=1$.
\end{enumerate}

\section*{Acknowledgments}

We thank Henry Wilton and Liviu Pãunescu for beneficial comments.
D.P.~was supported by ISF grant 1071/16. 

\selectlanguage{english}%
\bibliographystyle{alpha}
\bibliography{surface-words}

\noindent Michael Magee, \\
Department of Mathematical Sciences,\\
Durham University, \\
Lower Mountjoy, DH1 3LE Durham,United Kingdom\\
\texttt{michael.r.magee@durham.ac.uk}\\

\noindent Doron Puder, \\
School of Mathematical Sciences, \\
Tel Aviv University, \\
Tel Aviv, 6997801, Israel\\
\texttt{doronpuder@gmail.com}\selectlanguage{american}%

\end{document}